\documentclass[3p,times,nonatbib]{elsarticle}
\usepackage{amsmath,amssymb,amsthm,mathtools,bm,aliascnt}
\allowdisplaybreaks
\usepackage{graphicx,booktabs,array,microtype,enumitem}
\usepackage[backend=biber,style=numeric,sorting=nyt,maxbibnames=99]{biblatex}
\usepackage[hidelinks]{hyperref}
\usepackage[nameinlink,capitalise]{cleveref}
\crefname{theorem}{theorem}{theorems}\Crefname{theorem}{Theorem}{Theorems}
\crefname{proposition}{proposition}{propositions}\Crefname{proposition}{Proposition}{Propositions}
\crefname{lemma}{lemma}{lemmas}\Crefname{lemma}{Lemma}{Lemmas}
\crefname{corollary}{corollary}{corollaries}\Crefname{corollary}{Corollary}{Corollaries}
\crefname{remark}{remark}{remarks}\Crefname{remark}{Remark}{Remarks}
\usepackage{caption}
\graphicspath{{figures/}}

\newtheorem{theorem}{Theorem}[section]
\newaliascnt{proposition}{theorem}
\newtheorem{proposition}[proposition]{Proposition}
\aliascntresetthe{proposition}
\newaliascnt{lemma}{theorem}
\newtheorem{lemma}[lemma]{Lemma}
\aliascntresetthe{lemma}
\newaliascnt{corollary}{theorem}
\newtheorem{corollary}[corollary]{Corollary}
\aliascntresetthe{corollary}
\theoremstyle{remark}
\newaliascnt{remark}{theorem}
\newtheorem{remark}[remark]{Remark}
\aliascntresetthe{remark}
\newaliascnt{definition}{theorem}

\aliascntresetthe{definition}

\newcommand{\PP}{\mathbb P}
\newcommand{\C}{\mathbb C}

\newcommand{\Patches}{\mathcal P}
\newcommand{\F}{\mathcal F}
\newcommand{\Lop}{\mathcal L_{A,\eta,\kappa}}

\newcommand{\norm}[1]{\left\lVert #1\right\rVert}
\newcommand{\jump}[1]{\left[\!\left[#1\right]\!\right]}

\journal{Computers \& Mathematics with Applications}

\begin{document}
\begin{frontmatter}

\title{Efficient $C^1$ Bernstein Quasi-Trefftz Discretization for Heterogeneous High-Frequency Helmholtz Problems}

\author[tamu]{Shelvean Kapita}
\affiliation[tamu]{organization={Department of Mathematics, Texas A\&M University},
            city={College Station},
            state={Texas},
            country={USA}}

\begin{abstract}
We develop an efficient $C^1$ Bernstein quasi-Trefftz discretization for heterogeneous high-frequency Helmholtz problems with smooth variable coefficients. The method starts from ordinary Bernstein--B\'ezier polynomial pieces on small unstructured macro-patches. Classical B-coefficient smoothness relations are imposed first, producing an exact local $C^1$ space, and the differential equation is then used to remove locally resolvable residual directions. On two-triangle patches this reduces a quadratic-size conforming polynomial space to a trace-sized equation-adapted space before global assembly. Distinct patches are coupled through value and conormal-flux mismatches together with the remaining cell residual and physical boundary residuals. Nonpolynomial coefficients are approximated only during local space construction; the global residual is evaluated with the original medium. We establish residual best approximation, quantify the perturbation induced by coefficient projection, and derive a physical-energy reliability estimate with explicit dependence on the high-frequency resolution parameter. An optimized implementation uses explicit $C^1$ continuation, batched coefficient projection, batched residual formation, and reduced-coordinate assembly. Numerical experiments on noncongruent unstructured meshes demonstrate roundoff-level $C^1$ conformity, strong local compression, high-order resolution, controlled pollution, exact-circle penetrable scattering with a Fourier NtD map, and robust behavior through an Airy turning point.
\end{abstract}

\begin{keyword}
Bernstein--B\'ezier polynomials \sep quasi-Trefftz methods \sep heterogeneous Helmholtz equation \sep $C^1$ macro-patches \sep high-frequency waves \sep reduced-order discretization \sep minimum residual methods
\end{keyword}

\end{frontmatter}

\section{Introduction}\label{sec:intro}
High-frequency Helmholtz discretizations have to resolve oscillation while keeping approximation spaces, linear systems, and local basis construction numerically manageable. Trefftz methods address the first part of this problem by building the differential equation into the trial space, most commonly through plane waves, Fourier--Bessel functions, or related local wave solutions \cite{HiptmairMoiolaPerugiaSurvey2016}. Their effectiveness at high frequency is well established, but the local wave representation then becomes part of the numerical design: directions or special functions must be selected, scaled, and kept linearly independent \cite{GittelsonHiptmairPerugia2009,HiptmairMoiolaPerugia2011,ParolinHuybrechsMoiola2023}. Standard polynomial finite elements avoid that basis-selection problem but must resolve both oscillation and pollution effects \cite{BabuskaSauter1997,MelenkSauter2010,MelenkSauter2011}.

This paper develops a different route. We retain ordinary Bernstein--B\'ezier polynomials as the computational representation and extract a small equation-adapted subspace algebraically. The construction is motivated by two requirements that are especially useful for heterogeneous problems: the local representation should support exact geometric smoothness constraints, and the material coefficients should enter through local algebra rather than through a special-function basis. Bernstein coordinates satisfy both requirements. Differentiation, restriction to an edge, normal differentiation, and degree elevation are sparse coefficient operations, while classical spline theory provides explicit $C^1$ matching relations between adjacent triangles \cite{LaiSchumaker2007}.

The central design principle is \emph{conformity first, quasi-Trefftz second}. Each macro-patch is first reduced from broken Bernstein pieces to the exact strongly $C^1$ spline space allowed by its geometry. The Helmholtz residual is then imposed only inside that conforming space. For a full-row-rank two-triangle patch of degree $p$, this takes the local dimension from $p^2+p+1$ conforming coordinates to $2p+1$ equation-adapted coordinates. The compression is completed before neighboring patches interact. Global coupling therefore acts on trace-sized local vectors rather than on the full polynomial B-net.

This ordering is important computationally. The first reduction depends only on geometry and can be formed by explicit Bernstein continuation. The second reduction contains the differential operator, forcing, and heterogeneous coefficients and is obtained from a small residual-moment map. The third stage couples macro-patches through value and conormal-flux traces, the remaining cell residual, and the physical boundary residual. The resulting architecture separates geometry, local PDE adaptation, and global communication.

The method belongs to the broader family of quasi-Trefftz and embedded Trefftz discretizations. Generalized-plane-wave and Taylor constructions enforce the PDE to finite local order \cite{ImbertGerard2021,ImbertGerardEtAl2025,ImbertGerard2025Taylor}, while embedded Trefftz methods extract equation-adapted polynomial spaces through algebraic constraints \cite{LehrenfeldStocker2023,StockerVoulis2026}. Here the reduction is performed after imposing strong $C^1$ compatibility, and it is expressed directly in Bernstein coordinates. This makes the same local representation usable for smoothness, heterogeneous coefficients, nonzero forcing, local compression, and reduced global assembly.

The paper has four main computational contributions. First, we derive an explicit $C^1$ Bernstein synthesis on two-triangle patches and combine it with a stable QR-based quasi-Trefftz reduction. Second, we isolate the use of projected coefficients to local basis construction; the global residual continues to use the original medium, so coefficient approximation and discretization error remain separate. Third, we derive graph-residual and physical-energy estimates that expose the role of the high-frequency resolution parameter and the residual components that survive the local reduction. Fourth, we implement the method with batched coefficient projection, batched residual construction, residual-complement testing, and reduced-coordinate sparse assembly. In the exact-circle penetrable-scattering test with 200 macro-patches and 3400 active unknowns, optimized preprocessing takes less than one second, followed by a few seconds of global assembly and solution.

The numerical study is designed around computational questions rather than a single manufactured example. We verify strong $C^1$ conformity on noncongruent patches, examine conditioning of the reduced operator, measure convergence for a matrix-valued heterogeneous medium, identify the degree-resolution threshold on a fixed mesh, separate resolved and polluted high-frequency regimes at fixed $\kappa h/p$, test sensitivity to local coefficient projection, impose an exact Fourier NtD map on a circular truncation boundary for penetrable scattering, and finally stress the local reduction with an Airy turning-point transition.

The remainder of the paper is organized as follows. Section~\ref{sec:bern} develops the Bernstein--B\'ezier $C^1$ patch space. Section~\ref{sec:qt} constructs the local quasi-Trefftz reduction and analyzes coefficient perturbations. Section~\ref{sec:graph} gives the reduced global coupling and error estimates. Section~\ref{sec:impl} describes the implementation and computational savings. Section~\ref{sec:num} reports the numerical experiments. Section~\ref{sec:related} relates the method to Trefftz approaches and discusses computational implications and extensions.

\section{Bernstein--B\'ezier construction of the local \texorpdfstring{$C^1$}{C1} space}\label{sec:bern}
Let $P$ be a polygonal macro-patch equipped with a conforming triangulation $\Delta_P$, and write $N_T=|\Delta_P|$ for its number of triangles.  For a triangle
\[
 K=\langle v_1,v_2,v_3\rangle\in\Delta_P
\]
with barycentric coordinates $\lambda_1,\lambda_2,\lambda_3$, the Bernstein polynomial of degree $p$ associated with the multi-index $\alpha=(i,j,k)$, $|\alpha|=i+j+k=p$, is
\begin{equation}\label{eq:bernstein}
 B^p_{ijk}=\frac{p!}{i!j!k!}\lambda_1^i\lambda_2^j\lambda_3^k.
\end{equation}
There are
\[
 N_p=\dim\PP_p(K)=\frac{(p+1)(p+2)}2
\]
such polynomials.  We write the B-form of $s\in\PP_p(K)$ as
\begin{equation}\label{eq:bform}
 s|_K=\sum_{i+j+k=p}c^K_{ijk}B^p_{ijk}.
\end{equation}
Following the Bernstein--B\'ezier terminology of Lai and Schumaker \cite[Chapters 2--3]{LaiSchumaker2007}, the numbers $c^K_{ijk}$ are the \emph{B-coefficients}.  The associated domain point is
\[
 \xi^K_{ijk}=\frac{i v_1+j v_2+k v_3}{p},
\]
and the collection of the B-coefficients over the domain points is the B-net.  We denote by $c_K\in\C^{N_p}$ the vector of B-coefficients in a fixed ordering.

The local smoothness construction is carried out entirely in these ordinary polynomial coordinates.  First the B-coefficients are constrained so that the piecewise polynomial belongs to the classical spline space
\begin{equation}\label{eq:Spdef}
 S_p^1(\Delta_P)
 :=\{s\in C^1(P): s|_K\in\PP_p(K)\ \text{for every }K\in\Delta_P\}.
\end{equation}
Only after this exact $C^1$ space has been formed is the differential equation used to define the quasi-Trefftz reduction.

\subsection{Smoothness conditions across one interior edge}\label{subsec:c1edge}
Consider two triangles
\[
 K=\langle v_1,v_2,v_3\rangle,
 \qquad
 \widetilde K=\langle v_1,v_2,\widetilde v_3\rangle
\]
sharing the edge $e=\langle v_1,v_2\rangle$.  Write
\[
 s|_K=\sum_{i+j+k=p}c_{ijk}B^p_{ijk},
 \qquad
 s|_{\widetilde K}=\sum_{i+j+k=p}\widetilde c_{ijk}\widetilde B^p_{ijk},
\]
where the vertices on the common edge are ordered consistently.  Since $\lambda_3=0$ on $e$, the restriction of $s|_K$ to $e$ is determined exactly by the row of B-coefficients
\[
 c_{p,0,0},\ c_{p-1,1,0},\ldots,c_{0,p,0}.
\]
Hence continuity across $e$ is equivalent to equality of the B-coefficients associated with the domain points on the common edge,
\begin{equation}\label{eq:c0B}
 c_{ij0}=\widetilde c_{ij0},
 \qquad i+j=p.
\end{equation}
These are $p+1$ independent conditions for an isolated two-triangle join.

For the first-order smoothness condition, express the vertex $\widetilde v_3$ in barycentric coordinates with respect to $K$,
\begin{equation}\label{eq:oppbary}
 \widetilde v_3=\beta_1v_1+\beta_2v_2+\beta_3v_3,
 \qquad
 \beta_1+\beta_2+\beta_3=1.
\end{equation}
The standard Bernstein--B\'ezier $C^1$ smoothness condition across $e$ is then
\begin{equation}\label{eq:c1B}
 \boxed{\;
 \widetilde c_{ij1}
 =\beta_1c_{i+1,j,0}
  +\beta_2c_{i,j+1,0}
  +\beta_3c_{ij1},
 \qquad i+j=p-1.
 \;}
\end{equation}
Thus the first row of B-coefficients adjacent to $e$ on one triangle is determined by the edge row and the first adjacent row on the other triangle.  Equation \eqref{eq:c1B} is the degree-$p$ specialization of the classical Bernstein--B\'ezier smoothness relations; it supplies $p$ additional equations.  Geometrically, $C^0$ smoothness identifies the B-net on the common edge, while $C^1$ smoothness imposes the corresponding first-order relation on the two rows of domain points immediately adjacent to that edge.

For completeness, the same conditions can be written in a coordinate-free derivative form that is convenient later in the implementation.  Let $t_e$ be a tangent to $e$ and fix one physical normal $n_e$ for the geometric edge.  Equality of the traces in \eqref{eq:c0B} already implies equality of the tangential derivatives, since both pieces restrict to the same univariate degree-$p$ polynomial on $e$.  Therefore \eqref{eq:c0B}--\eqref{eq:c1B} are equivalently expressed as
\begin{equation}\label{eq:c1traceform}
 s|_{K}=s|_{\widetilde K}\quad\text{on }e,
 \qquad
 D_{n_e}s|_{K}=D_{n_e}s|_{\widetilde K}\quad\text{on }e.
\end{equation}
Because $\{t_e,n_e\}$ spans $\mathbb R^2$, these two identities imply equality of the full gradients on $e$ and hence a genuine $C^1$ join.

\subsection{Derivative and trace maps in B-coordinates}\label{subsec:berncalc}
Let $a\in\mathbb R^2$ and let $D_a=a\cdot\nabla$.  Since each barycentric coordinate is affine on $K$, the quantities
\[
 \alpha_r^K(a):=D_a\lambda_r,
 \qquad r=1,2,3,
\]
are constants and satisfy $\alpha_1^K(a)+\alpha_2^K(a)+\alpha_3^K(a)=0$.  Differentiation of \eqref{eq:bernstein} gives
\begin{equation}\label{eq:firstder}
 D_a B^p_{ijk}
 =p\sum_{r=1}^3\alpha_r^K(a)
 B^{p-1}_{(i,j,k)-e_r},
\end{equation}
with terms of negative index omitted.  Hence directional differentiation is a sparse operation on neighboring B-coefficients and lowers the degree by one.

The Cartesian derivatives are the special cases $a=e_1$ and $a=e_2$, written $D_x$ and $D_y$.  Applying \eqref{eq:firstder} twice gives the sparse coefficient map
\[
 D_K:\C^{N_p}\longrightarrow\C^{N_{p-2}}
\]
for $-\Delta s$.  Similarly, for an edge $e$ we denote by
\[
 T_{K,e}:\C^{N_p}\to\C^{p+1},
 \qquad
 N_{K,e}:\C^{N_p}\to\C^{p}
\]
the maps returning, respectively, the univariate Bernstein coefficients of $s|_e$ and $D_{n_e}s|_e$.  If $e=\langle v_1,v_2\rangle$ and $n_e$ is the fixed edge normal, then the $m$th degree-$(p-1)$ B-coefficient of the normal derivative, $m=0,\ldots,p-1$, is
\begin{equation}\label{eq:normalBcoeff}
 p\Big[
 \alpha_1^K(n_e)c_{p-m,m,0}
 +\alpha_2^K(n_e)c_{p-1-m,m+1,0}
 +\alpha_3^K(n_e)c_{p-1-m,m,1}
 \Big].
\end{equation}
Thus the trace uses only the edge row of the B-net, while the normal derivative uses the edge row together with the first adjacent row.  Formula \eqref{eq:normalBcoeff} is the derivative-map counterpart of the smoothness relation \eqref{eq:c1B}.

A single normal is attached to each geometric edge and used on both adjacent triangles.  This gives a common orientation for the derivative trace on both adjacent triangles.  Writing $h_e$ for the edge length, multiplying a normal-derivative row by a nonzero scaling such as $h_e/p$ is allowed for numerical equilibration and does not change the nullspace defining the spline space.

\subsection{Assembly of the macro-patch spline space}\label{subsec:c1matrix}
Concatenate the B-coefficient vectors of all triangles in $\Delta_P$ into
\[
 c_P=(c_{K_1}^{\mathsf T},\ldots,c_{K_{N_T}}^{\mathsf T})^{\mathsf T}.
\]
For every interior edge, insert the B-coefficient smoothness equations \eqref{eq:c0B}--\eqref{eq:c1B}; equivalently, one may insert the trace equations
\begin{equation}\label{eq:c1constraints}
 T_{K^+,e}c_{K^+}-T_{K^-,e}c_{K^-}=0,
 \qquad
 N_{K^+,e}c_{K^+}-N_{K^-,e}c_{K^-}=0,
\end{equation}
with a common orientation of the edge data.  We denote the resulting \emph{smoothness matrix} by $H_P$.  In coefficient form,
\begin{equation}\label{eq:Hkernel}
 S_p^1(\Delta_P)\cong\ker H_P.
\end{equation}
Let $Z_P$ be any numerically stable basis matrix for this kernel.  Then every spline in the local $C^1$ space is represented uniquely in reduced coordinates $a_P$ by
\begin{equation}\label{eq:Sp}
 c_P=Z_Pa_P,
 \qquad
 H_PZ_P=0.
\end{equation}
In the remainder we use the shorthand $S_p^1(P):=S_p^1(\Delta_P)$.

The construction gives exact $C^1$ continuity algebraically at the local-space level.  Every column of $Z_P$ is already the B-coefficient vector of a spline in $S_p^1(P)$.  In particular, the subsequent quasi-Trefftz reduction takes place \emph{inside} the $C^1$ spline space.

For the two-triangle macro-patch used in most experiments, the dimension count is explicit.  The broken degree-$p$ space has
\[
 2N_p=(p+1)(p+2)
\]
B-coefficients.  The common edge contributes $p+1$ value conditions and $p$ first-order smoothness conditions, so
\begin{equation}\label{eq:c1dim2tri}
 \dim S_p^1(P)
 =2N_p-(p+1)-p
 =p^2+p+1.
\end{equation}
For a general multi-triangle macro-patch, edge conditions can interact around vertices and cycles, and the correct dimension is
\begin{equation}\label{eq:c1dimgeneral}
 \dim S_p^1(P)=N_TN_p-\operatorname{rank}H_P.
\end{equation}
Thus the general construction determines the dimension from the assembled smoothness rank.  The same rank-based principle applies to the PDE reduction: if $C_P$ denotes the residual-moment matrix in these conforming coordinates, then
\[
 \dim V_p^{\rm qT}(P)=\dim S_p^1(P)-\operatorname{rank}C_P.
\]
For larger patches there is no universal analogue of the two-triangle formula $2p+1$; vertex compatibility, cycles, patch topology, and residual rank are all captured by the assembled matrices $H_P$ and $C_P$.  The implementation computes both ranks directly from these operators.

\begin{remark}[Why conformity is imposed first]\label{rem:first}
The smoothness matrix $H_P$ depends only on the macro-patch geometry and on the polynomial degree; it is independent of $A$, $\eta$, and $\kappa$.  The first reduction is therefore purely geometric,
\[
 \prod_{K\in\Delta_P}\PP_p(K)
 \longrightarrow S_p^1(P).
\]
On a multi-triangle patch, the image of $S_p^1(P)$ under the differential operator occupies the residual directions compatible with the strongly $C^1$ geometry.  We therefore form $S_p^1(P)$ first and impose the PDE residual equations in these conforming coordinates.  This is the conformity-first principle used throughout the paper.
\end{remark}

\section{\texorpdfstring{$C^1$}{C1} Bernstein quasi-Trefftz spaces}\label{sec:qt}
We consider
\begin{equation}\label{eq:pde}
 \Lop u:=-\nabla\!\cdot(A(x)\nabla u)-\kappa^2\eta(x)u=f
 \qquad\hbox{in }\Omega,
\end{equation}
where $A(x)\in\mathbb R^{2\times2}$ is symmetric and uniformly positive definite and $\eta(x)\ge\eta_0>0$.  Both coefficients may be smooth and nonpolynomial, and the resulting local spaces are quasi-Trefftz.

\subsection{What quasi-Trefftz means in the present method}\label{sec:whatqt}
The defining step of the method is the local equation-adapted reduction. For the homogeneous equation, the exact local Trefftz space would be
\[
 \mathcal T(P)=\{v:\Lop v=0\hbox{ in }P\}.
\]
For variable coefficients this space is infinite dimensional.  We approximate its local equation structure with degree-$p$ $C^1$ Bernstein polynomials that satisfy every residual equation resolvable in degree $p-2$. Set $Y_r(P)=\prod_{K\subset P}\PP_r(K)$ and let $\Pi_r^P$ denote the broken $L^2(P)$ projector onto $Y_r(P)$. The ideal exact-coefficient polynomial quasi-Trefftz space is
\begin{equation}\label{eq:qtprinciple}
 V_p^{\rm qT}(P)=\{v\in S_p^1(P):\Pi_{p-2}^P\Lop v=0\}.
\end{equation}
For the affine problem the corresponding condition is
\begin{equation}\label{eq:qtaffineprinciple}
 \Pi_{p-2}^P(\Lop v-f)=0.
\end{equation}
Thus the low-order interior residual is eliminated \emph{before} patches communicate with one another. The global unknowns are precisely the coefficient directions that survive this local PDE elimination.

This distinguishes the construction from a generic volume polynomial method. A two-triangle degree-$p$ $C^1$ patch contains $p^2+p+1$ conforming polynomial coordinates. When the degree-$(p-2)$ residual-moment map has full row rank, the local equation removes $p(p-1)$ independent residual coordinates, leaving
\[
 (p^2+p+1)-p(p-1)=2p+1
\]
coordinates. Hence the local unknown count has the boundary-scale growth $O(p)$, compared with the $O(p^2)$ conforming volume polynomial count.  The global system therefore receives only the trace-sized reduced coordinates.  At the lowest degree used by the construction, $p=2$, the residual test degree is zero: $Y_{p-2}(P)=Y_0(P)$ contains one constant test function on each triangle.  Thus the two-triangle residual map has two rows, and full row rank gives $7-2=5=2p+1$ reduced coordinates under the present broken-polynomial convention.

\begin{remark}[The polynomial quasi-Trefftz property]\label{rem:notexactpoly}
For $\kappa\ne0$, the highest-degree contribution of the constant-coefficient Helmholtz operator acting on a polynomial comes from $-\kappa^2v$.  The polynomial Trefftz analogue is therefore expressed through the residual moments in \eqref{eq:qtprinciple}: all moments visible at the differential order of the operator are annihilated, and the remaining consistency contribution is confined to higher-order modes.
\end{remark}

Before stating the local approximation result, we define the residual map used in the reduction.

\subsection{Residual moments by integration by parts}  For the quasi-Trefftz reduction we take $r=p-2$.  Let $n_K$ denote the outward unit normal to a triangle $K$.  For $v\in S_p^1(P)$ define the residual moment functional
\begin{equation}\label{eq:moment}
 \langle \mathcal R_P(v),\psi\rangle
 =\sum_{K\subset P}\left[(A\nabla v,\nabla\psi)_K
 -\langle n_K\!\cdot A\nabla v,\psi\rangle_{\partial K}
 -\kappa^2(\eta v,\psi)_K\right],
 \qquad \psi\in Y_r(P).
\end{equation}
This is exactly the moment of the strong residual $\Lop v$, but it requires only point values of $A$ and $\eta$; derivatives of $A$ are never formed.  The boundary contributions are retained triangle by triangle.  For a continuous exact medium and a $C^1$ patch field, contributions from an internal triangle edge cancel because the two normals are opposite and the conormal trace is single-valued; for independently projected trianglewise coefficients, any failure of this cancellation is part of the coefficient-approximation perturbation analyzed below.  In Bernstein coordinates \eqref{eq:moment} is a small dense matrix assembled by quadrature.

Choose a basis of the conforming patch space $S_p^1(P)$, for example the columns of the nullspace matrix $Z_P$ constructed in \Cref{subsec:c1matrix}, and let
\[
 d_P:=\dim S_p^1(P).
\]
The associated synthesis operator
\[
 \mathcal E_P:\mathbb C^{d_P}\longrightarrow S_p^1(P)
\]
maps a vector of conforming patch coordinates to its $C^1$ Bernstein spline.  Composing this synthesis operator with the residual-moment map gives the exact-coefficient local residual matrix
\begin{equation}\label{eq:CPdef}
 C_P:=\mathcal R_P\mathcal E_P.
\end{equation}
The polynomial quasi-Trefftz space is therefore the image under $\mathcal E_P$ of the nullspace of $C_P$.  We write
\[
 \gamma_P:=\sigma_{\min}^{+}(C_P)
\]
for the smallest nonzero singular value of this local residual matrix.  Singular-value and projector statements below use the scaled coefficient coordinates for the conforming space and Riesz-scaled coordinates for the residual test space; changing to another uniformly stable coordinate system only changes the associated continuity constants by the corresponding basis condition numbers.  In these coordinates $\gamma_P^{-1}$ controls sensitivity of the nullspace projector, while the nonzero-singular-value condition number is $\kappa_2^+(C_P)=\sigma_{\max}(C_P)/\gamma_P$.  A small $\gamma_P$ therefore signals a fragile reduction, whereas a moderate ratio $\sigma_{\max}/\gamma_P$ indicates a well-conditioned retained row space.  With these objects defined, the reduction also preserves ordinary polynomial approximation of exact local solutions.

\begin{proposition}[Near-kernel approximation of an exact local solution]\label{prop:qtbest}
Assume $C_P$ has full row rank.  Assume also that the residual-moment map extends continuously to a patch norm $X(P)$ with continuity constant $M_{P,R}$, and that the synthesis operator satisfies $\|\mathcal E_Pa\|_{X(P)}\le M_{P,E}\|a\|_2$. Let $u$ satisfy $\Lop u=0$ on $P$, and let $w=\mathcal E_Pa\in S_p^1(P)$ be any conforming polynomial approximation to $u$. Then there exists $v_P^{\rm qT}\in V_p^{\rm qT}(P)$ such that
\begin{equation}\label{eq:qtbest}
 \|u-v_P^{\rm qT}\|_{X(P)}
 \le \left(1+\frac{M_{P,E}M_{P,R}}{\gamma_P}\right)\|u-w\|_{X(P)}.
\end{equation}
\end{proposition}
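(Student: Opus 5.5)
The plan is to correct the given approximation $w=\mathcal E_Pa$ by the minimal-norm coefficient perturbation that removes its residual moments. We then use the fact that the exact solution has vanishing residual moments to bound that correction by $\|u-w\|_{X(P)}$.

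\textbf{Step 1: the correction.} Since $C_P$ has full row rank, $C_PC_P^{\dagger}$ is the identity on the residual test coordinates, and $\|C_P^\dagger\|_2=1/\sigma_{\min}(C_P)=1/\gamma_P$ (for full row rank the smallest nonzero singular value is the smallest singular value). Define
\[
 b:=a-C_P^\dagger C_Pa,\qquad v_P^{\rm qT}:=\mathcal E_Pb .
\]
Then $C_Pb=C_Pa-C_Pa=0$. Hence $\mathcal R_P(v_P^{\rm qT})=0$, so $\Pi_{p-2}^P\Lop v_P^{\rm qT}=0$, which means $v_P^{\rm qT}\in V_p^{\rm qT}(P)$. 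Here we use that the moment functional \eqref{eq:moment} is exactly the tested strong residual, and that $\mathcal E_P$ maps into $S_p^1(P)$.

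\textbf{Step 2: residual of $w$ equals residual of the error.} Because $\Lop u=0$ on $P$, the moments satisfy $\mathcal R_P(u)=0$. This uses the continuous extension of $\mathcal R_P$ to $X(P)$ together with the integration-by-parts identity applied to the exact solution. By linearity,
\[
 C_Pa=\mathcal R_P(w)=\mathcal R_P(w-u),
\]
and therefore $\|C_Pa\|\le M_{P,R}\|u-w\|_{X(P)}$. This bound is measured in the Riesz-scaled test coordinates in which $\gamma_P$ is defined.

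\textbf{Step 3: triangle inequality.} Using Steps 1 and 2,
\[
 \|u-v_P^{\rm qT}\|_{X(P)}\le\|u-w\|_{X(P)}+\|\mathcal E_P C_P^\dagger C_Pa\|_{X(P)}
 \le \|u-w\|_{X(P)}+M_{P,E}\gamma_P^{-1}M_{P,R}\|u-w\|_{X(P)},
\]
which is \eqref{eq:qtbest}.

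\textbf{Main obstacle.} The delicate point is Step 2. We must justify that $\mathcal R_P(u)=0$ for the exact solution, given that $\mathcal R_P$ is defined via \eqref{eq:moment} with trianglewise boundary terms. For $u$ smooth enough that $A\nabla u$ has a single-valued conormal trace, integration by parts on each $K$ shows that \eqref{eq:moment} equals $(\Lop u,\psi)_P=0$. The hypothesis that $\mathcal R_P$ extends continuously to $X(P)$ is what makes this, and the bound on $\mathcal R_P(w-u)$, legitimate. A second point is the consistency of norms: the operator bound on $C_P^\dagger$ must use the same coordinate norms as in the definition of $\gamma_P$ and of $M_{P,R}$. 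I will state explicitly that all three constants refer to those scaled coordinates.
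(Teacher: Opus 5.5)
Your proposal is correct and follows the paper's proof essentially verbatim. The paper also sets $d=C_P^\dagger C_Pa$ and $v_P^{\rm qT}=\mathcal E_P(a-d)$, bounds $\|d\|_2\le\gamma_P^{-1}\|\mathcal R_P(w-u)\|_2\le\gamma_P^{-1}M_{P,R}\|w-u\|_{X(P)}$ using the vanishing residual moments of $u$, and concludes with the triangle inequality; your extra remarks (elementwise integration by parts giving $\mathcal R_P(u)=0$, and the same scaled coordinates for $\gamma_P$ and $M_{P,R}$) only make explicit what the paper leaves implicit.
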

\begin{proof}
Set $d=C_P^\dagger C_Pa$ and $v_P^{\rm qT}=\mathcal E_P(a-d)$. Then $C_P(a-d)=0$, hence $v_P^{\rm qT}\in V_p^{\rm qT}(P)$. Because the exact solution has zero residual moments,
\[
 \|d\|_2
 \le \gamma_P^{-1}\|\mathcal R_P(w-u)\|_2
 \le \gamma_P^{-1}M_{P,R}\|w-u\|_{X(P)}.
\]
Therefore
\[
 \|w-v_P^{\rm qT}\|_{X(P)}
 \le M_{P,E}\|d\|_2,
\]
and the triangle inequality gives \eqref{eq:qtbest}.
\end{proof}

Proposition \ref{prop:qtbest} gives the local approximation mechanism: the quasi-Trefftz reduction removes PDE-incompatible polynomial directions from an ordinary $C^1$ approximant. When the scaled singular gap $\gamma_P$ remains separated from zero in the resolved regime, the reduced space inherits the approximation order of the underlying Bernstein spline space up to the local stability factor in \eqref{eq:qtbest}.

\subsection{Adaptive algebraic-degree projection of nonpolynomial coefficients}
For fast patch construction we project the medium locally, leaving the PDE solution unprojected.  On each triangle choose the smallest even degree $m_K$ for which the Bernstein $L^2$ projections $A_{m_K}$ and $\eta_{m_K}$ satisfy
\begin{equation}\label{eq:coefftol}
 \frac{\left(\norm{A-A_{m_K}}_{L^2(K)}^2+\norm{\eta-\eta_{m_K}}_{L^2(K)}^2\right)^{1/2}}
 {\left(\norm{A}_{L^2(K)}^2+\norm{\eta}_{L^2(K)}^2\right)^{1/2}}
 \le\varepsilon_A.
\end{equation}
For the matrix field $A$, the $L^2$ norm in \eqref{eq:coefftol} is the componentwise Frobenius norm.  We call this adaptive algebraic degree (AAD).  The local quasi-Trefftz basis uses $A_{m_K},\eta_{m_K}$, and the global graph residual in \Cref{sec:graph} is assembled with the original $A,\eta,f$.  Thus the coefficient projection is a basis-construction device.  In the experiments $\varepsilon_A=10^{-9}$; tightening it changes the reported errors below the displayed digits.

AAD uses a single precomputed projection hierarchy.  One high-order quadrature cloud is fixed on the reference triangle and the weighted Bernstein projection maps for all candidate degrees are precomputed.  On a physical triangle $A$ and $\eta$ are evaluated once on the mapped cloud.  Every candidate degree is then obtained by a matrix product, and the smallest degree satisfying \eqref{eq:coefftol} is selected.  The implementation searches candidate degrees in steps of two; this halves the number of projection tests and is only an implementation choice, since odd algebraic degrees are mathematically admissible as well.  Thus every candidate degree reuses the same coefficient samples and precomputed mass-matrix factors.

\subsection{Coefficient approximation and residual-map perturbation}\label{sec:coefferr}
The coefficient projection is local, so its effect can be quantified before any global solve.  Write
\[
 \delta A_K=A-A_{m_K},\qquad \delta\eta_K=\eta-\eta_{m_K},
\]
and, when a projected source is used, $\delta f_K=f-f_{m_K}$.  The implementation used for the experiments integrates the true $f$, hence $\delta f_K=0$ there; retaining $\delta f_K$ below makes the estimate applicable to a fully algebraic source projection as well.

For $v\in S_p^1(P)$ and $\psi\in Y_r(P)$ define the local coefficient perturbation functional
\begin{align}
 \mathfrak E_P(v,\psi):=
 \sum_{K\subset P}\{&\|\delta A_K\|_{L^\infty(K)}\|\nabla v\|_{L^2(K)}\|\nabla\psi\|_{L^2(K)} \\[-1mm]
 &+\|\delta A_K\|_{L^\infty(\partial K)}\|\nabla v\|_{L^2(\partial K)}\|\psi\|_{L^2(\partial K)}
 +\kappa^2\|\delta\eta_K\|_{L^\infty(K)}\|v\|_{L^2(K)}\|\psi\|_{L^2(K)}\}.
 \label{eq:coefE}
\end{align}
The boundary contribution is essential: the moment identity \eqref{eq:moment} contains the conormal trace, so an $L^2(K)$ coefficient indicator alone serves as an acceptance criterion for AAD, while a complete perturbation norm also requires the boundary term.  For smooth coefficients the corresponding edge defect follows from the same polynomial approximation by a trace estimate; in code it can also be audited directly on the edge quadrature cloud.

\begin{proposition}[Perturbation of residual moments]\label{prop:coeffmoment}
Let $\mathcal R_P$ and $\widetilde{\mathcal R}_P$ denote the residual-moment maps formed with $(A,\eta)$ and $(A_m,\eta_m)$, respectively.  Then
\begin{equation}\label{eq:momentpert}
 |\langle(\mathcal R_P-\widetilde{\mathcal R}_P)v,\psi\rangle|
 \le \mathfrak E_P(v,\psi).
\end{equation}
If the source is also projected, its moment perturbation satisfies
\begin{equation}\label{eq:sourcepert}
 |(f-f_m,\psi)_P|\le \sum_{K\subset P}\|\delta f_K\|_{L^2(K)}\|\psi\|_{L^2(K)}.
\end{equation}
\end{proposition}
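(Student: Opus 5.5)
The plan is to write out the difference of the two moment functionals from \eqref{eq:moment} term by term and bound each term with Hölder and Cauchy--Schwarz inequalities. Both $\mathcal R_P$ and $\widetilde{\mathcal R}_P$ are formed trianglewise with the same $v$ and $\psi$; only the coefficients change. The difference is therefore linear in $(\delta A_K,\delta\eta_K)$:
\[
 \langle(\mathcal R_P-\widetilde{\mathcal R}_P)v,\psi\rangle
 =\sum_{K\subset P}\Big[(\delta A_K\nabla v,\nabla\psi)_K
 -\langle n_K\!\cdot\delta A_K\nabla v,\psi\rangle_{\partial K}
 -\kappa^2(\delta\eta_K v,\psi)_K\Big].
\]
The conormal boundary term is kept triangle by triangle. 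We do not use any cancellation across interior edges, because the projected coefficients $A_{m_K}$ are independent on neighbouring triangles and the remark after \eqref{eq:moment} warns that such cancellation may fail.

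Next, each piece gets a pointwise bound. For the volume gradient term, $|\delta A_K\nabla v\cdot\nabla\bar\psi|\le|\delta A_K|\,|\nabla v|\,|\nabla\psi|$ pointwise. Here $|\delta A_K|$ is the spectral norm, and it is bounded by the Frobenius norm used in the definition of $\|\delta A_K\|_{L^\infty}$. Pulling out the $L^\infty(K)$ norm and applying Cauchy--Schwarz in $L^2(K)$ gives the first term of $\mathfrak E_P$. For the boundary term, $|n_K|=1$ gives $|n_K\cdot\delta A_K\nabla v|\le|\delta A_K|\,|\nabla v|$ on $\partial K$. Taking $L^\infty(\partial K)$ of $\delta A_K$ and applying Cauchy--Schwarz in $L^2(\partial K)$ gives the second term. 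For the mass term, the same argument with $|\delta\eta_K|\le\|\delta\eta_K\|_{L^\infty(K)}$ gives the third term. Summing over $K$ and applying the triangle inequality yields \eqref{eq:momentpert}.

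For \eqref{eq:sourcepert}, split $(f-f_m,\psi)_P=\sum_K(\delta f_K,\psi)_K$ and apply Cauchy--Schwarz on each triangle.

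The main care point, rather than a true obstacle, is the meaning of the boundary terms. Since $v\in S_p^1(P)$ is piecewise polynomial and smooth on each closed triangle, $\nabla v|_K$ has well-defined traces on $\partial K$ and $\|\nabla v\|_{L^2(\partial K)}$ is finite. I would also note that $\delta A_K$ must be understood on $\partial K$ as the trace of $A-A_{m_K}$ from $K$, which is consistent with the trianglewise definition. I would state explicitly that the matrix norm in the $L^\infty$ bounds dominates the operator norm, which holds for Frobenius.
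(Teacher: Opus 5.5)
Your proposal is correct and follows the paper's proof: subtract the two versions of \eqref{eq:moment}, bound the volume diffusion, trianglewise conormal boundary, and reaction differences by Cauchy--Schwarz with the $L^\infty$ coefficient bounds, and treat \eqref{eq:sourcepert} by trianglewise Cauchy--Schwarz. Your extra remarks, on not using interior-edge cancellation, on the matrix norm dominating the operator norm, and on the meaning of the traces on $\partial K$, are consistent refinements of that same argument.
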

\begin{proof}
Subtract the two versions of \eqref{eq:moment}.  The three resulting terms are the volume diffusion term, the conormal boundary term, and the reaction term.  Apply Cauchy--Schwarz to each and use the $L^\infty$ coefficient bounds.  Equation \eqref{eq:sourcepert} is immediate.
\end{proof}

For fixed polynomial degrees, inverse and trace inequalities turn \eqref{eq:coefE} into a purely coefficient-based quantity.  For example, on a shape-regular triangle,
\begin{align}\label{eq:coefscale}
 \mathfrak E_P(v,\psi)
 &\le C_{\rm inv}(p,r)\sum_{K\subset P}
 \left[
 h_K^{-1}\|\delta A_K\|_{L^\infty(K)}
 +h_K^{-1}\|\delta A_K\|_{L^\infty(\partial K)}
 +\kappa^2\|\delta\eta_K\|_{L^\infty(K)}
 \right] \notag\\
 &\qquad\times \|v\|_{H^1(K)}\|\psi\|_{L^2(K)}.
\end{align}
where $C_{\rm inv}(p,r)$ is explicit on the reference triangle and is precomputable for every $(p,r)$ used by the code.  We retain the trace-explicit form \eqref{eq:coefE} in the analysis so that the polynomial-degree factors remain visible.

The AAD tolerance has the expected approximation meaning.  If a scalar component $a$ of $A$, $\eta$, or $f$ belongs to $H^s(K)$, $1\le s\le m+1$, and the local polynomial projector is uniformly stable in the required norms, standard polynomial approximation and the trace theorem give
\begin{align}
 \|a-\Pi_m a\|_{L^2(K)}+\frac{h_K}{m+1}\|\nabla(a-\Pi_m a)\|_{L^2(K)}
 &\le C\left(\frac{h_K}{m+1}\right)^s |a|_{H^s(K)},\label{eq:coeffapprox}\\
 \|a-\Pi_m a\|_{L^2(\partial K)}
 &\le C\left(\frac{h_K}{m+1}\right)^{s-1/2} |a|_{H^s(K)}.\label{eq:coefftrace}
\end{align}
For analytic coefficient fields the observed degree convergence is substantially faster; this is why the nonpolynomial matrix experiments below typically stop at AAD degrees between six and eight.

To connect coefficient approximation to the reduced space, let $C_P$ be the exact-coefficient homogeneous residual matrix on $S_p^1(P)$ and $\widetilde C_P=C_P+\Delta C_P$ its AAD counterpart, both expressed in the same scaled coefficient and residual Riesz coordinates.  Let
\[
 \gamma_P=\sigma_{\min}^{+}(C_P)
\]
be the smallest nonzero singular value of the exact local residual map.

\begin{proposition}[Rank and kernel stability]\label{prop:kernelpert}
Assume $C_P$ has full row rank. If $\|\Delta C_P\|_2<\gamma_P$, then $C_P$ and $\widetilde C_P$ have the same row rank.  If $\Pi_P$ and $\widetilde\Pi_P$ are the orthogonal projectors onto their kernels, then
\begin{equation}\label{eq:gap}
 \|\Pi_P-\widetilde\Pi_P\|_2
 \le
 \frac{\|\Delta C_P\|_2}{\gamma_P-\|\Delta C_P\|_2}.
\end{equation}
In particular, for $\|\Delta C_P\|_2\le\gamma_P/2$ the local quasi-Trefftz space rotates by at most $2\|\Delta C_P\|_2/\gamma_P$.
\end{proposition}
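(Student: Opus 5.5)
The plan is to use standard singular-value perturbation, namely Weyl's inequality, together with a projector identity that holds for full-row-rank matrices. Let $C_P\in\C^{m\times d}$ with $m\le d$ have full row rank $m$. Then $\gamma_P=\sigma_m(C_P)$ is its $m$th singular value. By Weyl, $|\sigma_j(\widetilde C_P)-\sigma_j(C_P)|\le\|\Delta C_P\|_2$ for every $j$. It follows that $\sigma_m(\widetilde C_P)\ge\gamma_P-\|\Delta C_P\|_2>0$. Since $\widetilde C_P$ has only $m$ rows, its rank is at most $m$, so $\widetilde C_P$ also has full row rank $m$. This gives the rank statement. The two kernels then both have dimension $d-m$.

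For the projector bound I will pass to the row spaces. The orthogonal projector onto $\ker C_P$ is $\Pi_P=I-C_P^\dagger C_P$, and similarly $\widetilde\Pi_P=I-\widetilde C_P^\dagger\widetilde C_P$. Hence $\Pi_P-\widetilde\Pi_P=Q-\widetilde Q$, where $Q$ and $\widetilde Q$ are the orthogonal projectors onto the row spaces $\operatorname{ran}C_P^*$ and $\operatorname{ran}\widetilde C_P^*$. Because the two subspaces have equal dimension, I will use the identity $\|Q-\widetilde Q\|_2=\|(I-Q)\widetilde Q\|_2=\|\Pi_P\widetilde Q\|_2$, a standard fact for equidimensional subspaces.

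The key estimate comes from the identity $\Pi_P\widetilde C_P^*=\Pi_P(C_P+\Delta C_P)^*=\Pi_P\Delta C_P^*$, which holds because $\Pi_P C_P^*=0$. Take any unit vector $x=\widetilde Q x$ in the row space of $\widetilde C_P$ and write $x=\widetilde C_P^*y$. Full row rank gives $\|y\|_2\le 1/\sigma_m(\widetilde C_P)$. Then
\[
\|\Pi_P x\|_2=\|\Pi_P\Delta C_P^* y\|_2\le \frac{\|\Delta C_P\|_2}{\sigma_m(\widetilde C_P)}\le\frac{\|\Delta C_P\|_2}{\gamma_P-\|\Delta C_P\|_2}.
\]
This yields \eqref{eq:gap}. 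The final claim follows directly: if $\|\Delta C_P\|_2\le\gamma_P/2$, the denominator is at least $\gamma_P/2$, so the right-hand side is at most $2\|\Delta C_P\|_2/\gamma_P$.

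The main obstacle is the step $\|Q-\widetilde Q\|_2=\|(I-Q)\widetilde Q\|_2$. It requires equal dimensions, which is exactly why the rank statement must be proved first. Once the dimensions agree, I would cite the standard result (Kato, or Stewart--Sun), under which $\|Q-\widetilde Q\|$ equals the sine of the largest principal angle between the two subspaces. A short justification is that $Q-\widetilde Q=(I-\widetilde Q)Q-\widetilde Q(I-Q)$ is a sum of two pieces with orthogonal ranges and orthogonal co-ranges. For equidimensional subspaces the two pieces have equal norms, so $\|Q-\widetilde Q\|_2=\|\widetilde Q(I-Q)\|_2=\|(I-Q)\widetilde Q\|_2$.
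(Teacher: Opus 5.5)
Your proof is correct and follows the paper's route. Weyl's inequality gives $\sigma_m(\widetilde C_P)\ge\gamma_P-\|\Delta C_P\|_2>0$ and hence equal row rank, and the projector bound is the sin-$\Theta$ estimate for the row spaces (equivalently, the kernels), which you derive explicitly from $\Pi_P\widetilde C_P^*=\Pi_P\Delta C_P^*$ where the paper simply cites it. As a side remark, running your key step with the roles swapped, i.e.\ bounding $\|\widetilde\Pi_P Q\|_2$ via $\widetilde\Pi_P C_P^*=-\widetilde\Pi_P\Delta C_P^*$ and $\sigma_m(C_P)=\gamma_P$, gives the slightly sharper bound $\|\Delta C_P\|_2/\gamma_P$ once the dimensions are known to agree.
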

\begin{proof}
Weyl's singular-value perturbation bound preserves the positive singular-value gap.  Applying the standard sin-$\Theta$ perturbation estimate to the right singular subspaces associated with the zero singular values gives \eqref{eq:gap}.
\end{proof}

The quantity $\gamma_P$ is therefore the correct scale against which the coefficient projection error should be judged.  This gives a sharper interpretation of the numerical rank test used in the implementation: AAD is harmless when the induced residual-map perturbation is small relative to the retained singular gap.

\begin{proposition}[Projected affine consistency]\label{prop:affinepert}
Suppose the AAD affine coordinate $\widetilde a_P$ satisfies
\[
 \widetilde C_P\widetilde a_P=\widetilde b_P,
\]
where $b_P$ and $\widetilde b_P$ are the true and projected source moment vectors.  Then its residual with respect to the exact local moment equations obeys
\begin{equation}\label{eq:affinecons}
 \|C_P\widetilde a_P-b_P\|_2
 \le \|\Delta C_P\|_2\,\|\widetilde a_P\|_2
      +\|\widetilde b_P-b_P\|_2.
\end{equation}
Thus a true-source implementation has only the first term on the right-hand side.
\end{proposition}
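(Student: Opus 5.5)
The plan is a one-line algebraic manipulation followed by the triangle inequality. Write the exact local residual of the AAD coordinate as a correction of the projected equation. Since $\widetilde C_P=C_P+\Delta C_P$, we have $C_P=\widetilde C_P-\Delta C_P$, and therefore
\[
 C_P\widetilde a_P-b_P
 =\bigl(\widetilde C_P\widetilde a_P-\widetilde b_P\bigr)
 -\Delta C_P\,\widetilde a_P
 +\bigl(\widetilde b_P-b_P\bigr).
\]
By hypothesis the first bracket vanishes. This is where the assumption $\widetilde C_P\widetilde a_P=\widetilde b_P$ is used: the projected system is solved exactly. When $\widetilde C_P$ has full row rank, which \Cref{prop:kernelpert} guarantees whenever $\|\Delta C_P\|_2<\gamma_P$, this consistent solve is always available.

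Next take Euclidean norms. The triangle inequality and the definition of the operator $2$-norm give $\|\Delta C_P\widetilde a_P\|_2\le\|\Delta C_P\|_2\|\widetilde a_P\|_2$, which yields \eqref{eq:affinecons} directly.

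For the final sentence, observe that in a true-source implementation the source moments are formed with the original $f$. Hence $\widetilde b_P=b_P$ (that is, $\delta f_K=0$) and the second term drops out.

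There is no genuine obstacle here. The only point requiring care is bookkeeping: $C_P$, $\widetilde C_P$, $b_P$ and $\widetilde b_P$ must all be expressed in the same scaled coefficient and Riesz residual coordinates, as stipulated before \Cref{prop:kernelpert}. Otherwise the splitting above would mix coordinate systems and pick up basis condition numbers.

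Optionally, I would add a remark connecting to \Cref{prop:coeffmoment}. The entries of $\Delta C_P$ are moments of $\mathcal R_P-\widetilde{\mathcal R}_P$ on basis pairs, so $\|\Delta C_P\|_2$ is controlled by $\mathfrak E_P$. Likewise $\|\widetilde b_P-b_P\|_2$ is controlled by \eqref{eq:sourcepert}. This makes the right-hand side computable from coefficient data, but it is not needed for the stated inequality.
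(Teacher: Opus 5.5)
Your proposal is correct and follows the paper's own argument: substitute $C_P=\widetilde C_P-\Delta C_P$, use the projected equation $\widetilde C_P\widetilde a_P=\widetilde b_P$ to cancel the main term, and apply the triangle inequality together with $\|\Delta C_P\widetilde a_P\|_2\le\|\Delta C_P\|_2\|\widetilde a_P\|_2$. Your remarks on the solvability of the projected system and on the link to \Cref{prop:coeffmoment} are not needed for the inequality, but they are consistent with the paper.
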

\begin{proof}
Use $C_P=\widetilde C_P-\Delta C_P$ and insert the projected equation.
\end{proof}

\subsection{Affine quasi-Trefftz patch space for nonzero forcing}\label{sec:affine}
The $C^1$ coordinates are the coefficients $a_P$ in \eqref{eq:Sp}, so that the full B-coefficient vector is $c_P=Z_Pa_P$.  Let $\widetilde C_P$ denote the degree-$(p-2)$ residual-moment matrix in these $C^1$ coordinates, assembled with the accepted AAD approximation of the medium, and let $\widetilde b_P$ contain the corresponding source moments.  Choose a particular coordinate vector $a_P^f$ and a homogeneous coordinate matrix $W_P$ satisfying
\begin{equation}\label{eq:affine}
 \widetilde C_Pa_P^f=\widetilde b_P,
 \qquad
 \widetilde C_PW_P=0.
\end{equation}
Any particular solution of the residual-moment equations generates the same affine trial set, because two such solutions differ by an element of the homogeneous local space.  In full Bernstein--B\'ezier coordinates the particular lift and the homogeneous quasi-Trefftz basis are
\[
 c_P^f=Z_Pa_P^f,
 \qquad
 Q_P=Z_PW_P.
\]
Thus
\begin{equation}\label{eq:localQT}
 u_h|_P=u_P^f+\sum_{j=1}^{r_P}(z_P)_j\,q_{P,j},
 \qquad
 \operatorname{coeff}_B(u_P^f)=c_P^f,
 \qquad
 \operatorname{coeff}_B(q_{P,j})=(Q_P)_{:j},
\end{equation}
where $r_P=\dim\ker\widetilde C_P$.  The source changes only the affine lift; the globally coupled unknowns are the homogeneous coordinates $z_P$.

\begin{proposition}[Trace-sized local dimension]\label{prop:dim}
Assume the degree-$(p-2)$ residual-moment map in \eqref{eq:affine} has full row rank on a two-triangle $C^1$ macro-patch.  Then
\[
 \dim\ker\widetilde C_P=2p+1.
\]
The same conclusion is stable under sufficiently small coefficient-projection perturbations.
\end{proposition}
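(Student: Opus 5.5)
The argument is a rank--nullity count in the conforming coordinates, followed by a perturbation argument for the stability claim. Regard $\widetilde C_P$ as a linear map $\mathbb C^{d_P}\to\mathbb C^{m_P}$, where $d_P=\dim S_p^1(P)$ and $m_P=\dim Y_{p-2}(P)$ is the number of residual moments, in Riesz coordinates.

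First, by \eqref{eq:c1dim2tri}, the two-triangle patch has $d_P=p^2+p+1$. This count is valid because the $p+1$ value conditions \eqref{eq:c0B} and the $p$ conditions \eqref{eq:c1B} are independent: each condition determines a distinct B-coefficient $\widetilde c_{ij0}$ or $\widetilde c_{ij1}$ of $\widetilde K$ in terms of the coefficients of $K$, so $H_P$ has rank $2p+1$.

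Second, the test space $Y_{p-2}(P)=\PP_{p-2}(K)\times\PP_{p-2}(\widetilde K)$ has dimension
\[
m_P=2N_{p-2}=(p-1)p.
\]
Full row rank of $\widetilde C_P$ means $\operatorname{rank}\widetilde C_P=p(p-1)$. Rank--nullity then gives
\[
\dim\ker\widetilde C_P=(p^2+p+1)-p(p-1)=2p+1.
\]
It should also be noted that the synthesis $\mathcal E_P$ is injective, since $Z_P$ is a basis matrix. Hence the dimension in coordinates equals the dimension of the function space.

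For the stability statement, I read ``small perturbation'' as relative to the exact-coefficient matrix $C_P$, assumed to have full row rank with $\gamma_P=\sigma_{\min}^+(C_P)=\sigma_{p(p-1)}(C_P)>0$. If $\|\Delta C_P\|_2<\gamma_P$, Weyl's inequality gives
\[
\sigma_{p(p-1)}(\widetilde C_P)\ge\gamma_P-\|\Delta C_P\|_2>0,
\]
so $\widetilde C_P$ also has full row rank $p(p-1)$, and the count above applies again. This is exactly the first assertion of \Cref{prop:kernelpert}, which I will invoke directly. The kernel projector bound \eqref{eq:gap} then additionally shows that the $(2p+1)$-dimensional kernel moves continuously with the perturbation.

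The main obstacle is not the arithmetic but stating the hypotheses precisely. Full row rank must be meant as rank equal to $m_P=p(p-1)$, which requires $m_P\le d_P$; this holds for all $p\ge2$. The dimension of $S_p^1$ must also be the exact generic value, which is immediate for a single interior edge, since no vertex or cycle interactions occur there. The smallness condition should be stated quantitatively as $\|\Delta C_P\|_2<\gamma_P$, linking it to \Cref{prop:coeffmoment} through the bound $\mathfrak E_P$.
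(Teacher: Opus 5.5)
Your proposal is correct and follows the paper's argument: $\dim S_p^1(P)=p^2+p+1$ from the $2p+1$ edge conditions, $p(p-1)$ residual rows, rank--nullity, and local constancy of rank under perturbations below the smallest retained singular value, which your Weyl step makes explicit. You also justify the independence of the $2p+1$ smoothness conditions (each has its own pivot coefficient on $\widetilde K$) and the injectivity of $\mathcal E_P$; the paper's short proof leaves both of these implicit.
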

\begin{proof}
The broken degree-$p$ space on two triangles has dimension $(p+1)(p+2)$.  Strong $C^1$ matching across the common edge imposes $p+1$ value conditions and $p$ normal-derivative conditions, so $\dim S_p^1(P)=p^2+p+1$.  The broken degree-$(p-2)$ residual space has dimension $p(p-1)$.  Rank--nullity therefore gives $2p+1$.  Rank is locally constant under perturbations smaller than the smallest retained singular value.
\end{proof}

\begin{remark}
The implementation uses the computed rank directly, with $2p+1$ emerging on the full-row-rank two-triangle patches.  The implementation records the numerical rank and residual of every local solve.  In all heterogeneous tests below every patch retained exactly $2p+1$ homogeneous coordinates, and the projected forcing moments were matched to roundoff.
\end{remark}

\begin{proposition}[Coefficient-perturbed local approximation]\label{prop:approx}
Let $V_P=\ker C_P$ be the exact-coefficient homogeneous local space and $\widetilde V_P=\ker\widetilde C_P$ the AAD space.  Assume $\|\Delta C_P\|_2<\gamma_P$ and let
\[
 \delta_P:=\frac{\|\Delta C_P\|_2}{\gamma_P-\|\Delta C_P\|_2}.
\]
For every coefficient vector $a\in V_P$ there exists $\widetilde a\in\widetilde V_P$ such that
\begin{equation}\label{eq:localapproxpert}
 \|a-\widetilde a\|_2\le \delta_P\|a\|_2.
\end{equation}
Consequently, if $\mathcal E_P$ denotes the Bernstein synthesis map into any norm $X(P)$ and $M_{P,X}=\|\mathcal E_P\|_{2\to X(P)}$, then
\begin{equation}\label{eq:localapproxphys}
 \inf_{\widetilde v\in\widetilde V_P}\|v-\widetilde v\|_{X(P)}
 \le M_{P,X}\delta_P\|a\|_2,
 \qquad v=\mathcal E_Pa\in V_P.
\end{equation}
Thus the coefficient approximation contributes through the dimensionless ratio $\|\Delta C_P\|_2/\gamma_P$.
\end{proposition}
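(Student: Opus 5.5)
The plan is to take $\widetilde a:=\widetilde\Pi_P a$, the orthogonal projection of $a$ onto $\widetilde V_P=\ker\widetilde C_P$, and then read off both estimates from \Cref{prop:kernelpert}.

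First, the hypothesis $\|\Delta C_P\|_2<\gamma_P$ is exactly the one in \Cref{prop:kernelpert}. That proposition therefore gives equal row ranks, so $\widetilde V_P$ is a genuine subspace of the same dimension as $V_P$. It also gives the projector gap bound \eqref{eq:gap}:
\[
 \|\Pi_P-\widetilde\Pi_P\|_2\le\delta_P .
\]

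Second, since $a\in V_P$ we have $\Pi_Pa=a$. Hence
\[
 a-\widetilde a=\Pi_Pa-\widetilde\Pi_Pa=(\Pi_P-\widetilde\Pi_P)a,
\]
and the gap bound yields \eqref{eq:localapproxpert} immediately. No finer estimate such as the sin-$\Theta$ angle between the subspaces is needed, because the projector difference already controls the distance. If one preferred to avoid the projector bound, a direct route also works: write $\widetilde C_Pa=\Delta C_Pa$, so that $\|a-\widetilde\Pi_Pa\|_2\le\|\Delta C_P\|_2\|a\|_2/\sigma_{\min}^+(\widetilde C_P)$. Weyl's inequality gives $\sigma_{\min}^+(\widetilde C_P)\ge\gamma_P-\|\Delta C_P\|_2$ once the ranks agree. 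This reproduces the same $\delta_P$, and I would mention it as the self-contained alternative.

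Third, for the physical statement, set $\widetilde v=\mathcal E_P\widetilde a$. This lies in $\widetilde V_P$ viewed as a space of splines, namely the image $\mathcal E_P(\ker\widetilde C_P)$. Linearity of the synthesis map gives
\[
 \|v-\widetilde v\|_{X(P)}=\|\mathcal E_P(a-\widetilde a)\|_{X(P)}\le M_{P,X}\|a-\widetilde a\|_2\le M_{P,X}\delta_P\|a\|_2 .
\]
Bounding the infimum by this particular choice gives \eqref{eq:localapproxphys}.

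The only delicate point is bookkeeping rather than analysis. The spaces $V_P$ and $\widetilde V_P$ are used both as coordinate kernels and as spline spaces via $\mathcal E_P$, and both matrices must be expressed in the same scaled and Riesz coordinates, as stipulated before \Cref{prop:kernelpert}. This matters because $\|\cdot\|_2$, $\gamma_P$ and the projectors are all taken in those coordinates. With that made explicit, the proof is a direct corollary of \Cref{prop:kernelpert}.
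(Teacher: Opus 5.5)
Your proof is correct and is essentially the paper's argument: take $\widetilde a=\widetilde\Pi_Pa$, use $a=\Pi_Pa$ together with the gap bound \eqref{eq:gap} from \Cref{prop:kernelpert}, then apply the synthesis map. Your alternative route via $\widetilde C_Pa=\Delta C_Pa$ and Weyl's inequality is also valid, given the full-row-rank hypothesis inherited from \Cref{prop:kernelpert}, and it makes the estimate self-contained instead of relying on the cited sin-$\Theta$ bound.
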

\begin{proof}
Take $\widetilde a=\widetilde\Pi_Pa$.  Since $a=\Pi_Pa$, \eqref{eq:localapproxpert} follows from \Cref{prop:kernelpert}.  Applying the synthesis map gives \eqref{eq:localapproxphys}.
\end{proof}

\subsection{The surviving quasi-Trefftz consistency defect}\label{sec:qtdefect}
The degree-$(p-2)$ equations define the reduced coordinates and annihilate the resolvable low-order residual.  The remaining volume quantity is precisely the higher-order PDE defect that survives the quasi-Trefftz reduction.

Let $\Lop^{(m)}$ denote the operator formed with the AAD coefficients and let $f_m$ denote the source used in the local affine constraint. For every implemented affine patch function $v\in u_P^f+\widetilde V_p^{\rm qT}(P)$,
\begin{equation}\label{eq:lowannihilate}
 \Pi_{p-2}^P(\Lop^{(m)} v-f_m)=0.
\end{equation}
Consequently the oversampled true residual has the exact decomposition
\begin{align}
 \Pi_p^P(\Lop v-f)
 ={}&\Pi_p^P\big[(\Lop-\Lop^{(m)})v-(f-f_m)\big] \label{eq:defectsplit}\\
 &+(I-\Pi_{p-2}^P)\Pi_p^P(\Lop^{(m)} v-f_m).\notag
\end{align}
The first term is purely the coefficient/source approximation defect analyzed in \Cref{sec:coefferr}. The second is the \emph{Trefftz truncation defect}: by \eqref{eq:lowannihilate} it contains only residual modes above degree $p-2$. This identity is central to the interpretation of the method.

If the projected medium and source are exact and the polynomial space happens to contain a local exact solution, both terms in \eqref{eq:defectsplit} vanish and the method reduces to a trace-only Trefftz coupling. More generally, the second term is the high-order residual component controlled by polynomial approximation as degree or patch resolution increase, while the first is driven below the discretization scale by AAD. The volume channel therefore measures the departure from the exact Trefftz property.

For global coupling we retain the complementary degree-$p$ residual directions that survive the local construction.  Let $\rho_{P,p}^{\perp}(v;f)$ denote the $L^2(P)$ Riesz representative of
\[
 (I-\Pi_{p-2}^P)\Pi_p^P(\Lop v-f).
\]
For the exact solution this quantity vanishes.  For an AAD patch function, \eqref{eq:defectsplit} shows that it consists of the surviving quasi-Trefftz residual together with the corresponding high-order part of the coefficient-approximation defect.  The omitted degree-$(p-2)$ true-medium component is treated explicitly in the physical-energy estimate below.  In particular,
\begin{equation}\label{eq:defectbound}
 \|\rho_{P,p}^{\perp}(v;f)\|_{L^2(P)}
 \le \|\Pi_p^P[(\Lop-\Lop^{(m)})v-(f-f_m)]\|_{L^2(P)}
 +\|(I-\Pi_{p-2}^P)\Pi_p^P(\Lop^{(m)} v-f_m)\|_{L^2(P)}.
\end{equation}
Thus the cell term used below measures precisely the two controlled departures from an exact Trefftz patch: medium approximation and finite polynomial resolution.

\section{Intrinsic graph-residual coupling}\label{sec:graph}
Let $\Patches_h$ be a nonoverlapping partition of $\Omega$ into macro-patches, let $\F_h^i$ denote the set of interfaces between distinct macro-patches, and set $\Gamma=\partial\Omega$.  On each patch let $\widetilde V_p^{\rm qT}(P)$ be the implemented homogeneous AAD space and let $u_P^f$ be the local particular lift from \Cref{sec:affine}.  The homogeneous broken space and the affine trial set are
\begin{equation}\label{eq:broken}
 \widetilde V_h^{\rm qT}:=\prod_{P\in\Patches_h}\widetilde V_p^{\rm qT}(P),
 \qquad
 \mathcal V_h^f:=\prod_{P\in\Patches_h}\bigl(u_P^f+\widetilde V_p^{\rm qT}(P)\bigr).
\end{equation}
Every function in $\mathcal V_h^f$ is strongly $C^1$ inside each macro-patch, and distinct patches are coupled through the global Cauchy graph.
For an interface $e=P^+\cap P^-$ fix one normal $n_e$ and set
\[
 \jump v=v^+-v^-,
 \qquad
 \jump{q_n(v)}=n_e\!\cdot A\nabla v^+-n_e\!\cdot A\nabla v^-.
\]
This fixed-normal convention avoids sign ambiguity in the Cauchy trace.  On an interior interface define the characteristic scale
\[
 Z_e=\kappa\sqrt{\eta\,n_e^T A n_e}.
\]

We solve the interior impedance problem
\begin{equation}\label{eq:iip}
 \Lop u=f\quad\hbox{in }\Omega,
 \qquad
 n\!\cdot A\nabla u-iZ_nu=g\quad\hbox{on }\Gamma,\qquad Z_n=\kappa\sqrt{\eta\,n^TA n}.
\end{equation}
For $v\in \mathcal V_h^f$ define the graph residual
\begin{align}
 \mathcal J_h(v)=
 &\sum_{P\in\Patches_h}\kappa^{-2}\norm{\rho_{P,p}^{\perp}(v;f)}_{L^2(P)}^2 \label{eq:J}\\
 &+\sum_{e\in\F_h^i}
 \left(
 \norm{Z_e^{1/2}\jump v}_{L^2(e)}^2
 +\norm{Z_e^{-1/2}\jump{q_n(v)}}_{L^2(e)}^2
 \right)\notag\\
 &+\norm{Z_n^{-1/2}\bigl(n\!\cdot A\nabla v-iZ_nv-g\bigr)}_{L^2(\Gamma)}^2,\notag
\end{align}
The discrete solution is
\begin{equation}\label{eq:min}
 u_h=\arg\min_{v_h\in \mathcal V_h^f}\mathcal J_h(v_h).
\end{equation}

The scale $Z_e$ is the natural local characteristic impedance.  For $A=I$ and $\eta=1$ it reduces to $\kappa$, recovering the constant-coefficient Cauchy graph.  The jump terms are the two components of the scaled value/conormal mismatch.

\begin{remark}[Why a quasi-Trefftz method still has a cell term]\label{rem:cellterm}
The first line of \eqref{eq:J} measures the surviving consistency defect after the local reduction.  The local equations \eqref{eq:lowannihilate} have already eliminated all degree-$(p-2)$ residual directions; on the full-row-rank two-triangle patches considered numerically, this reduces the local coordinates from $O(p^2)$ to $2p+1$.  The cell term acts entirely on these reduced coordinates.  For an exact Trefftz basis it vanishes identically and \eqref{eq:J} reduces to Cauchy-trace and physical-boundary matching.

The retained complement has a precise role: it measures the polynomial residual directions that survive the local near-kernel reduction.  The volume channel therefore records the consistency cost associated with the polynomial near-kernel representation.
\end{remark}

\subsection{Residual optimality and discrete injectivity}
Let $\mathcal B_h$ denote the complete linear residual map appearing in \eqref{eq:J}, with homogeneous data.  It acts on differences of broken functions; in particular, the difference of two members of $\mathcal V_h^f$ lies in $\widetilde V_h^{\rm qT}$.  The graph seminorm is
\[
 \norm{w}_{G,h}:=\norm{\mathcal B_hw}_2.
\]

\begin{theorem}[Finite-dimensional well posedness]\label{thm:norm}
If $\mathcal B_h$ is injective on $\widetilde V_h^{\rm qT}$, then $\norm{\cdot}_{G,h}$ is a norm on the homogeneous discrete space, \eqref{eq:min} has a unique solution in the affine set $\mathcal V_h^f$, and the normal-equation matrix is Hermitian positive definite.
\end{theorem}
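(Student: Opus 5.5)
The plan is to reduce everything to finite-dimensional linear algebra in the reduced coordinates $z=(z_P)_{P\in\Patches_h}$. I would first fix a reference element of the affine set: take $u^f:=(u_P^f)_P\in\mathcal V_h^f$. Every $v\in\mathcal V_h^f$ can then be written uniquely as $v=u^f+Qz$, where $Q$ is the block-diagonal synthesis built from the bases $Q_P=Z_PW_P$ of \Cref{sec:affine}. The map $z\mapsto Qz$ is a linear bijection onto $\widetilde V_h^{\rm qT}$, since the columns of $W_P$ form a basis of $\ker\widetilde C_P$ and $Z_P$ has full column rank.

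Next I would record that the residual entering $\mathcal J_h$ is affine in $v$. The cell component $(I-\Pi_{p-2}^P)\Pi_p^P(\Lop v-f)$, the jumps, and the impedance residual $n\cdot A\nabla v-iZ_nv-g$ are all linear in $v$ up to a data term. The weights $\kappa^{-2}$, $Z_e^{\pm1/2}$ and $Z_n^{-1/2}$ are fixed positive functions, because $A$ is positive definite and $\eta\ge\eta_0>0$. After choosing Riesz/quadrature coordinates for each $L^2$ channel, this gives
\[
\mathcal J_h(u^f+Qz)=\norm{Bz-r}_2^2,
\qquad
B:=\mathcal B_hQ,
\qquad
r:=-\mathcal B_h^{\rm data}(u^f),
\]
where $\mathcal B_h^{\rm data}(u^f)$ collects the full residual of $u^f$ including $f$ and $g$, and $\mathcal B_h$ is its homogeneous linear part. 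For the weighted $L^2$ channels to be realised exactly as Euclidean norms, the coordinates must be Riesz coordinates, or an $L^2$-exact Cholesky factor must be used; this is the one point I would check carefully.

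The norm claim then follows directly. The quantity $\norm{w}_{G,h}=\norm{\mathcal B_hw}_2$ is the composition of a linear map with a norm, so it is absolutely homogeneous and satisfies the triangle inequality; it is therefore a seminorm. By the injectivity hypothesis, $\norm{w}_{G,h}=0$ forces $w=0$, so it is a norm on $\widetilde V_h^{\rm qT}$. Injectivity of $\mathcal B_h$ on $\widetilde V_h^{\rm qT}$, combined with injectivity of $Q$, makes $B$ have full column rank. Consequently $B^{*}B$ is Hermitian, and $z^{*}B^{*}Bz=\norm{Bz}_2^2>0$ for $z\ne0$, so the normal-equation matrix is Hermitian positive definite.

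For existence and uniqueness, the strictly convex quadratic $z\mapsto\norm{Bz-r}_2^2$ has the unique minimiser $z=(B^*B)^{-1}B^*r$. Alternatively, the expansion
\[
\norm{B(z+y)-r}^2=\norm{Bz-r}^2+2\operatorname{Re}\,y^*(B^*Bz-B^*r)+\norm{By}^2
\]
shows that any stationary point is a strict global minimiser, and stationary points are exactly the solutions of the normal equations. Mapping back through $z\mapsto u^f+Qz$ gives the unique $u_h\in\mathcal V_h^f$. This conclusion does not depend on the choice of particular lift, because different lifts differ by elements of $\widetilde V_h^{\rm qT}$.

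The main obstacle is not analytic. It is the bookkeeping needed to confirm that $\mathcal J_h$ is exactly the squared Euclidean norm of an affine map in $z$ under the chosen coordinates, including the complex impedance term and the fact that $\rho^\perp$ is a linear projection. Once that representation is in place, every remaining step is standard least-squares theory.
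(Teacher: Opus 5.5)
Your proposal is correct and follows the same route as the paper: pass to reduced coordinates, use injectivity of $\mathcal B_h$ on $\widetilde V_h^{\rm qT}$ (together with injectivity of $z\mapsto Qz$) to get a full-column-rank residual matrix whose normal matrix $B^*B$ is Hermitian positive definite, and conclude existence and uniqueness from finite-dimensional least squares. You also spell out details that the paper's two-line proof leaves implicit: the affine structure of $\mathcal J_h$, the use of Riesz coordinates, and the independence of the result from the choice of particular lift.
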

\begin{proof}
Injectivity gives positive definiteness of $\mathcal B_h^*\mathcal B_h$ on the reduced coordinates.  Existence and uniqueness then follow from finite-dimensional least squares.
\end{proof}

\begin{theorem}[Graph-norm best approximation]\label{thm:quasi}
Let $u$ solve the heterogeneous impedance problem and define $\mathcal B_hu$ using the same true-coefficient residual moments and traces.  Then
\begin{equation}\label{eq:quasi}
 \norm{u-u_h}_{G,h}=\min_{v_h\in \mathcal V_h^f}\norm{u-v_h}_{G,h}.
\end{equation}
\end{theorem}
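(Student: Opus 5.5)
The plan is to recognize \eqref{eq:min} as a linear least-squares problem over an affine set and to show that the full affine residual of $v_h$ equals the homogeneous residual of $v_h-u$. The identity \eqref{eq:quasi} then reduces to the statement that minimizing $\mathcal J_h$ is the same as minimizing $\norm{u-\cdot}_{G,h}$.

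First, I write the residual map with data as $\mathcal J_h(v)=\norm{\mathcal B_hv-F_h}_2^2$. Here $\mathcal B_h$ is the homogeneous linear map from \Cref{thm:norm}, extended to any sufficiently smooth broken function. This includes the exact solution $u$, since all of its ingredients are well defined for $u\in H^2$ restricted to patches: Riesz representatives of $(I-\Pi_{p-2}^P)\Pi_p^P\Lop(\cdot)$, interface traces, and impedance traces. The vector $F_h$ collects the data: the cell components $(I-\Pi_{p-2}^P)\Pi_p^Pf$ scaled by $\kappa^{-1}$, zero interface components, and $Z_n^{-1/2}g$ on $\Gamma$. Each channel of \eqref{eq:J} is an affine function of $v$ (linear part minus data), so this splitting is just bookkeeping. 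I will check that the cell channel uses the true coefficients $A,\eta$ and the true $f$, as stated after \eqref{eq:J}.

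Second, I verify consistency: $\mathcal B_hu=F_h$. On each patch $\Lop u=f$, so $\rho^\perp_{P,p}(u;f)=0$. Because $u$ is globally $H^2$ and $A$ is smooth, the jumps $\jump u$ and $\jump{q_n(u)}$ vanish on every interface. The impedance condition in \eqref{eq:iip} kills the boundary channel. Hence, for every $v_h\in\mathcal V_h^f$,
\[
 \mathcal J_h(v_h)=\norm{\mathcal B_h(v_h-u)}_2^2=\norm{u-v_h}_{G,h}^2 .
\]

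Third, I conclude that $u_h$, being the minimizer of $\mathcal J_h$ over $\mathcal V_h^f$, minimizes $\norm{u-v_h}_{G,h}$ over the same set, which is \eqref{eq:quasi}. I will note that the minimum is attained because this is a finite-dimensional least-squares problem: it is unique under the injectivity assumption of \Cref{thm:norm}, and attained in any case, since a convex quadratic bounded below on a finite-dimensional affine set attains its minimum. Equivalently, I can phrase the result as the Galerkin orthogonality $\langle\mathcal B_h(u-u_h),\mathcal B_hw\rangle=0$ for all $w\in\widetilde V_h^{\rm qT}$, which is what the normal equations give.

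The main obstacle is the consistency step. It needs $\mathcal B_hu$ to make sense and to use exactly the same true-coefficient operators as the discrete residual. The local AAD coefficients enter only through the trial set $\mathcal V_h^f$, never through $\mathcal J_h$. I will stress this: if the cell channel used $\Lop^{(m)}$, the identity $\mathcal B_hu=F_h$ would fail, leaving an extra consistency term bounded as in \eqref{eq:defectbound}. I will also require enough regularity of $u$ (say $H^2$ on each patch) for the traces of $A\nabla u$ to be defined.
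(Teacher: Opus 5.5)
Your proposal is correct and follows the paper's route. The exact solution annihilates every channel of $\mathcal J_h$: the surviving cell residual, the interface value and conormal jumps, and the impedance boundary residual. Hence $\mathcal J_h(v_h)=\norm{u-v_h}_{G,h}^2$ on $\mathcal V_h^f$, and the least-squares minimizer is the graph-norm best approximation. Your splitting $\mathcal J_h(v)=\norm{\mathcal B_hv-F_h}_2^2$ and your remarks on the regularity of $u$, on using the true coefficients in the global residual, and on the minimum being attained only spell out what the paper's two-line proof leaves implicit.
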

\begin{proof}
The exact solution satisfies all volume moments, interface transmission conditions, and the physical boundary condition.  Hence the least-squares functional is exactly the squared graph distance from $u$.
\end{proof}

\subsection{Physical-energy reliability at high frequency}\label{sec:physical}
Graph-residual best approximation is exact, and the physical error estimate follows from a stability transfer from residuals and Cauchy mismatches to the broken energy norm.  The optimized formulation gives a more explicit transfer than a generic discrete norming assumption because the locally annihilated degree-$(p-2)$ residual modes are removed before global assembly.

For a broken function $w$ set
\begin{equation}\label{eq:energynorm}
 \|w\|_{1,\kappa,A,\eta}^2
 :=\sum_{P\in\Patches_h}
 \left(\|A^{1/2}\nabla w\|_{L^2(P)}^2
 +\kappa^2\|\eta^{1/2}w\|_{L^2(P)}^2\right),
\end{equation}
and define the broken negative norm by
\[
 \|r\|_{-1,h}^2:=\sum_{P\in\Patches_h}\|r\|_{H^{-1}(P)}^2.
\]
Let $\Pi_{p-2}^P$ be the $L^2(P)$ projector onto the broken polynomial space of degree at most $p-2$ on the triangles of $P$.

\begin{lemma}[Negative norm of the surviving polynomial residual]\label{lem:negpoly}
Assume the macro-patches are uniformly shape regular.  If $q_P\in Y_p(P)$ satisfies
\[
 (q_P,r)_{P}=0\qquad\text{for every }r\in Y_{p-2}(P),
\]
then
\begin{equation}\label{eq:negpoly}
 \|q_P\|_{H^{-1}(P)}
 \le C_{\rm app}\frac{h_P}{p}\,\|q_P\|_{L^2(P)},
\end{equation}
where $C_{\rm app}$ depends only on the shape-regularity class of the patch.  For families of larger multi-triangle patches, the same statement requires uniform control of the patch cardinality and Poincar\'e/chunkiness constants; the two-triangle families used below satisfy this automatically.
\end{lemma}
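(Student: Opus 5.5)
The plan is a duality argument combined with polynomial approximation of $H^1_0$ test functions. By definition,
\[
 \|q_P\|_{H^{-1}(P)}=\sup_{0\ne\phi\in H^1_0(P)}\frac{(q_P,\phi)_P}{\|\nabla\phi\|_{L^2(P)}}
\]
(or the full $H^1$ norm in the denominator; either choice only strengthens the bound below). Fix such a $\phi$ and let $r=\Pi_{p-2}^P\phi\in Y_{p-2}(P)$ be its broken $L^2$ projection. The orthogonality hypothesis then gives
\[
 (q_P,\phi)_P=(q_P,\phi-\Pi_{p-2}^P\phi)_P\le\|q_P\|_{L^2(P)}\,\|\phi-\Pi_{p-2}^P\phi\|_{L^2(P)}.
\]
So the whole estimate reduces to a single approximation bound,
\[
 \|\phi-\Pi_{p-2}^P\phi\|_{L^2(P)}\le C_{\rm app}\frac{h_P}{p}\|\nabla\phi\|_{L^2(P)}.
\]

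Since $\Pi_{p-2}^P$ is broken, the approximation bound can be proved triangle by triangle. On each $K\subset P$, the projection $\Pi_{p-2}^K$ is the $L^2(K)$-best approximation in $\PP_{p-2}(K)$. It therefore suffices to exhibit one polynomial $\pi\in\PP_{p-2}(K)$ with $\|\phi-\pi\|_{L^2(K)}\le C(h_K/p)|\phi|_{H^1(K)}$. This is the standard $hp$ approximation result for $H^1$ functions on shape-regular simplices. To get it, pull back to the reference triangle, where degree-explicit estimates give $\inf_{\pi\in\PP_{n}}\|\hat\phi-\pi\|_{L^2}\le C(n+1)^{-1}|\hat\phi|_{H^1}$, for instance via the Babu\v{s}ka--Suri/Melenk polynomial approximation results or via the $L^2$-orthogonal projection and the Bramble--Hilbert lemma in its $p$-explicit form. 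Scaling back introduces $h_K$, and shape regularity controls the Jacobian factors. Summing the squared bounds over the triangles of $P$ and using $h_K\le h_P$, together with $p-1\ge p/2$ for $p\ge2$, yields the claimed bound with a constant depending only on the shape-regularity class. No continuity of $\phi$ across internal edges is needed here, so the $H^1_0$ boundary condition is not even used.

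The main obstacle is the degree-explicit factor $1/p$: the $h$-scaling is routine, but the $p$-robust $L^2$ approximation of $H^1$ functions by polynomials is a nontrivial cited result and should not be reproven. Here I would invoke the known estimate for the $L^2$ projection on simplices (e.g.\ via Legendre/Jacobi expansions on the collapsed square). The remark about larger patches follows from the same argument: the constant depends on the shape regularity of the individual triangles, while the patch cardinality and chunkiness enter only when comparing $h_K$ with $h_P$. This explains the stated hypothesis, which two-triangle patches satisfy trivially.
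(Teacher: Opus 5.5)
Your proposal is correct and follows the paper's proof. Orthogonality lets you subtract $\Pi_{p-2}^P\phi$, Cauchy--Schwarz reduces the claim to the elementwise $hp$ estimate $\|\phi-\Pi_{p-2}^P\phi\|_{L^2(P)}\le C_{\rm app}(h_P/p)\|\nabla\phi\|_{L^2(P)}$, and your extra steps (best approximation, scaling, $p-1\ge p/2$) fill in what the paper cites; one small correction is that $h_K\le h_P$ holds trivially, so this broken elementwise argument needs no control of patch cardinality or chunkiness. Your explanation of the multi-triangle caveat is therefore slightly off: those constants would enter only a patch-level (unbroken) argument or a reverse comparison $h_P\le C h_K$.
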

\begin{proof}
For $\phi\in H_0^1(P)$, orthogonality gives
\[
 (q_P,\phi)_P=(q_P,\phi-\Pi_{p-2}^P\phi)_P.
\]
The standard $hp$ polynomial projection estimate on a shape-regular triangle, applied elementwise on the patch (see, e.g., \cite[Chap.~3]{Schwab1998}), yields
\[
 \|\phi-\Pi_{p-2}^P\phi\|_{L^2(P)}
 \le C_{\rm app}\frac{h_P}{p}\|\nabla\phi\|_{L^2(P)}.
\]
Taking the supremum over $\phi$ proves \eqref{eq:negpoly}.
\end{proof}

For any broken $w$ define the unresolved residual tail
\begin{equation}\label{eq:restail}
 \mathcal R_{>p}(w)
 :=\left(\sum_{P\in\Patches_h}
 \|(I-\Pi_p^P)\Lop w\|_{H^{-1}(P)}^2\right)^{1/2}.
\end{equation}
For the error $e=u-u_h$ one may equivalently replace $\Lop e$ by $f-\Lop u_h$.  Thus $\mathcal R_{>p}(e)$ measures the physical residual tail above the degree-$p$ volume channel of the graph functional.

For the implemented AAD space there is also a low-order coefficient defect
\begin{equation}\label{eq:lowdef}
 \mathcal D_{\rm AAD}(w)
 :=\left(\sum_{P\in\Patches_h}
 \|\Pi_{p-2}^P\Lop w\|_{H^{-1}(P)}^2\right)^{1/2}.
\end{equation}
For an exact-coefficient quasi-Trefftz error this term vanishes.  For the implemented error $e=u-u_h$, using \eqref{eq:lowannihilate} gives
\begin{equation}\label{eq:lowdefsplit}
 \Pi_{p-2}^P\Lop e
 =-\Pi_{p-2}^P\big[(\Lop-\Lop^{(m)})u_h-(f-f_m)\big],
\end{equation}
so $\mathcal D_{\rm AAD}(e)$ is controlled directly by the accepted coefficient/source approximation, with wave resolution governed separately by $\kappa h/p$.

\begin{lemma}[High-order residual tail]\label{lem:restail}
Assume the macro-patches are uniformly shape regular.  For every $r\in L^2(P)$,
\begin{equation}\label{eq:restailhp}
 \|(I-\Pi_p^P)r\|_{H^{-1}(P)}
 \le C_{\rm app}\frac{h_P}{p+1}\,\|(I-\Pi_p^P)r\|_{L^2(P)}.
\end{equation}
If $r$ is piecewise $H^s$ on the triangles of $P$, with $s>0$, then
\begin{equation}\label{eq:restailsmooth}
 \|(I-\Pi_p^P)r\|_{H^{-1}(P)}
 \le C_s\left(\frac{h_P}{p+1}\right)^{s+1}|r|_{H^s(\Delta_P)}.
\end{equation}
\end{lemma}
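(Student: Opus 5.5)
The first estimate \eqref{eq:restailhp} follows the duality argument of \Cref{lem:negpoly}, now with $p$ in place of $p-2$. Put $q:=(I-\Pi_p^P)r$. This $q$ is $L^2(P)$-orthogonal to $Y_p(P)$. For $\phi\in H_0^1(P)$ we therefore have
\[
 (q,\phi)_P=(q,\phi-\Pi_p^P\phi)_P\le\|q\|_{L^2(P)}\,\|\phi-\Pi_p^P\phi\|_{L^2(P)}.
\]
We then apply the elementwise $hp$ projection estimate used in \Cref{lem:negpoly}, but for degree $p$ rather than $p-2$:
\[
 \|\phi-\Pi_p^P\phi\|_{L^2(P)}\le C_{\rm app}\frac{h_P}{p+1}\|\nabla\phi\|_{L^2(P)},
\]
where we use $p+1$ instead of $p$ in the denominator, which only changes the constant. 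Dividing by $\|\nabla\phi\|_{L^2(P)}$ and taking the supremum gives \eqref{eq:restailhp}. On shape-regular two-triangle patches, $h_K\simeq h_P$, so the elementwise estimate sums to the patch estimate with a shape-dependent constant.

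For the smooth estimate \eqref{eq:restailsmooth}, we combine \eqref{eq:restailhp} with an $L^2$ approximation bound for $(I-\Pi_p^P)r$. Because $\Pi_p^P$ is the broken $L^2$ projector, it acts triangle by triangle. On each $K\subset P$ it is the best $L^2(K)$ approximation from $\PP_p(K)$, so
\[
 \|(I-\Pi_p^P)r\|_{L^2(K)}\le\inf_{q\in\PP_p(K)}\|r-q\|_{L^2(K)}\le C_s\left(\frac{h_K}{p+1}\right)^{s}|r|_{H^s(K)}
\]
by the standard $hp$ polynomial approximation result (e.g.\ \cite[Chap.~3]{Schwab1998}). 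Squaring, summing over the triangles of $P$ and using $h_K\le h_P$ gives
\[
 \|(I-\Pi_p^P)r\|_{L^2(P)}\le C_s\left(\frac{h_P}{p+1}\right)^{s}|r|_{H^s(\Delta_P)}.
\]
Inserting this into \eqref{eq:restailhp} adds one further factor $h_P/(p+1)$, which yields the exponent $s+1$.

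The only delicate point is the polynomial approximation estimate with explicit dependence on $p$ for arbitrary $s>0$. For noninteger $s$, or for $s>p+1$, the bound $(h/(p+1))^s$ is not literally valid with an $s$-independent seminorm. I would handle this by assuming $0<s\le p+1$, which the statement implicitly requires, so that the cited $hp$ result applies; the case of noninteger $s$ then follows by interpolation. Everything else is orthogonality plus Cauchy--Schwarz, with constants depending only on shape regularity.
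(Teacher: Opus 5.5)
Your proposal is correct and follows essentially the same route as the paper. For the $H^{-1}$ bound you use duality with the orthogonality of $(I-\Pi_p^P)r$ to $Y_p(P)$ and the degree-$p$ $hp$ projection estimate; for the smooth case you insert the elementwise best-approximation bound into that estimate. Your restriction $0<s\le p+1$ is a genuine sharpening that the paper leaves implicit: a polynomial $r$ of degree $p+1$ is a counterexample for $s\ge p+2$. It is consistent with the range $1\le s\le m+1$ the paper states in \eqref{eq:coeffapprox}.
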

\begin{proof}
The first estimate follows exactly as in \Cref{lem:negpoly}, now using orthogonality to $Y_p(P)$.  The second follows by combining \eqref{eq:restailhp} with the standard elementwise $hp$ projection estimate for degree $p$ \cite[Chap.~3]{Schwab1998}.
\end{proof}

The low-order AAD term can also be bounded by quantities already controlled in local basis construction.  Write $A_m$ and $\eta_m$ for the accepted local coefficient approximants, and set
\[
 \varepsilon_{A,0,P}=\|A-A_m\|_{L^\infty(P)},\qquad
 \varepsilon_{A,1,P}=\|\operatorname{div}(A-A_m)\|_{L^\infty(P)},\qquad
 \varepsilon_{\eta,P}=\|\eta-\eta_m\|_{L^\infty(P)}.
\]
Here the divergence of a matrix is taken columnwise, consistently with $\operatorname{div}(A\nabla v)$.

\begin{proposition}[Coefficient control of the omitted low-order residual]\label{prop:aadphysical}
Assume $A\in W^{1,\infty}(P)^{2\times2}$ and let $u_h$ be polynomial on each triangle.  Then the low-order defect of the discrete error satisfies
\begin{equation}\label{eq:aadphysical}
\begin{aligned}
 \mathcal D_{\rm AAD}(u-u_h)
 \le C\Bigg(\sum_{P\in\Patches_h} h_P^2
 \Big[&\varepsilon_{A,0,P}\|D^2u_h\|_{L^2(P)}
 +\varepsilon_{A,1,P}\|\nabla u_h\|_{L^2(P)}\\
 &+\kappa^2\varepsilon_{\eta,P}\|u_h\|_{L^2(P)}
 +\|f-f_m\|_{L^2(P)}\Big]^2\Bigg)^{1/2}.
\end{aligned}
\end{equation}
with a constant depending only on the patch shape-regularity class.  In particular this term vanishes for an exactly represented polynomial medium and source.
\end{proposition}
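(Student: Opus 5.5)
The argument has two steps. First, \eqref{eq:lowdefsplit} rewrites the low-order defect as a trianglewise $L^2$ function built from coefficient and source differences. Second, a Friedrichs inequality on $H_0^1(P)$ turns the $H^{-1}(P)$ norm of that function into a factor $h_P$ times its $L^2$ norm. Since \eqref{eq:aadphysical} contains $\sum_P h_P^2[\cdots]^2$ under a square root, this is exactly one power of $h_P$ per patch. So it suffices to prove, patch by patch,
\[
 \|\Pi_{p-2}^P\Lop(u-u_h)\|_{H^{-1}(P)}\le C h_P\Big[\varepsilon_{A,0,P}\|D^2u_h\|_{L^2(P)}+\varepsilon_{A,1,P}\|\nabla u_h\|_{L^2(P)}+\kappa^2\varepsilon_{\eta,P}\|u_h\|_{L^2(P)}+\|f-f_m\|_{L^2(P)}\Big],
\]
then square and sum over $P\in\Patches_h$.

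\emph{Step 1 (algebraic reduction).} Because $u_h|_P\in u_P^f+\widetilde V_p^{\rm qT}(P)$, \eqref{eq:lowannihilate} holds. Because $\Lop u=f$, \eqref{eq:lowdefsplit} gives
\[
 \Pi_{p-2}^P\Lop(u-u_h)=-\Pi_{p-2}^P g_P,\qquad g_P:=(\Lop-\Lop^{(m)})u_h-(f-f_m).
\]
Here $\Lop^{(m)}$ is read as the trianglewise strong operator, which is the reading under which \eqref{eq:lowannihilate} is stated. On each triangle $K$, $u_h$ is a polynomial and $\delta A=A-A_m\in W^{1,\infty}(K)$. Expanding the divergence columnwise gives
\[
 (\Lop-\Lop^{(m)})u_h=-\delta A:D^2u_h-(\operatorname{div}\delta A)\cdot\nabla u_h-\kappa^2\delta\eta\,u_h .
\]
This uses the symmetry of $A$ and $A_m$, so that $\operatorname{div}(\delta A\nabla u_h)$ splits as shown. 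Hence $g_P\in L^2(P)$. H\"older's inequality then gives
\[
 \|g_P\|_{L^2(P)}\le C\big(\varepsilon_{A,0,P}\|D^2u_h\|+\varepsilon_{A,1,P}\|\nabla u_h\|+\kappa^2\varepsilon_{\eta,P}\|u_h\|+\|f-f_m\|\big).
\]
The norms on the right are broken $L^2(P)$ norms, and the $\varepsilon$'s are trianglewise sup norms. The constant $C$ only accounts for the Frobenius pairing.

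\emph{Step 2 (negative norm).} Take $\phi\in H_0^1(P)$. By Cauchy--Schwarz and the Friedrichs inequality $\|\phi\|_{L^2(P)}\le C_F h_P\|\nabla\phi\|_{L^2(P)}$,
\[
 |(\Pi_{p-2}^Pg_P,\phi)_P|\le\|\Pi_{p-2}^Pg_P\|_{L^2(P)}\|\phi\|_{L^2(P)}\le C_Fh_P\|g_P\|_{L^2(P)}\|\nabla\phi\|_{L^2(P)} .
\]
The second inequality also uses that the orthogonal projector $\Pi_{p-2}^P$ is an $L^2$ contraction. The constant $C_F$ depends only on the shape-regularity class, since a two-triangle patch is a bounded Lipschitz domain of diameter $h_P$. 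Taking the supremum over $\phi$ gives the per-patch bound displayed above. Squaring and summing gives \eqref{eq:aadphysical}.

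Finally, if the medium and source are exactly polynomial, then $\delta A$, $\delta\eta$ and $f-f_m$ all vanish, so $g_P=0$ and the defect vanishes.

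The main obstacle is Step 1's consistency point. \eqref{eq:lowannihilate} is derived from the integrated-by-parts moments \eqref{eq:moment}, which are built from trianglewise projected coefficients. On interior triangle edges of the patch these moments can differ from the broken strong residual by conormal edge terms $\langle n_K\cdot A_m\nabla u_h,\psi\rangle_{\partial K}$, which need not cancel when $A_m$ jumps across an edge. I would handle this by working with the broken strong form throughout, since $\Pi_{p-2}^P$ is itself broken. Any leftover edge contribution involves only $\jump{A_m}$ across interior edges. On each edge this is bounded by $2\varepsilon_{A,0,P}$, because $A$ is continuous there. It can therefore be absorbed into the $\varepsilon_{A,0,P}$ term via a discrete trace inequality, at the cost of enlarging $C$ by a $p$-dependent inverse constant. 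I would state that dependence explicitly in the proof if it arises.
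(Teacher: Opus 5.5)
Your proof is correct and follows the paper's own argument: \eqref{eq:lowdefsplit}, $L^2$ contractivity of $\Pi_{p-2}^P$, and a local Poincar\'e/Friedrichs bound on $H_0^1(P)$ to gain the factor $h_P$, then expanding $-\operatorname{div}((A-A_m)\nabla u_h)$ with the $L^\infty$ coefficient bounds and summing over patches. Your closing worry is moot, and you do not need the $p$-dependent fallback: \eqref{eq:moment} keeps the full $\partial K$ conormal terms on every triangle, so trianglewise integration by parts makes the moments equal $\sum_K(\Lop^{(m)}v,\psi)_K$ exactly, with no leftover edge-jump term; likewise the columnwise expansion of the divergence does not need symmetry of $A$ or $A_m$.
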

\begin{proof}
From \eqref{eq:lowdefsplit}, $L^2$ contractivity of $\Pi_{p-2}^P$, and the local Poincar\'e inequality,
\[
 \|\Pi_{p-2}^P\Lop(u-u_h)\|_{H^{-1}(P)}
 \le Ch_P\| (\Lop-\Lop^{(m)})u_h-(f-f_m)\|_{L^2(P)}.
\]
Expanding $-\operatorname{div}((A-A_m)\nabla u_h)$ and applying the $L^\infty$ coefficient bounds gives \eqref{eq:aadphysical}; summation over patches finishes the proof.
\end{proof}

Thus both terms absent from the optimized graph functional have explicit origins: \Cref{prop:aadphysical} ties the low-order component to coefficient/source approximation, while \Cref{lem:restail} gives an additional $h_P/(p+1)$ gain for the unresolved high-order residual.

Let $\mathcal S_h(w)$ denote the square root of the two Cauchy-interface terms and the physical-boundary term in \eqref{eq:J}.  Continuous resolvent estimates for variable-coefficient Helmholtz problems are available under geometric and coefficient assumptions; see, for example, \cite{GrahamPemberySpence2019,GrahamSauter2020}.  To separate that continuous issue from the discrete reduction, we assume the following impedance stability and lifting estimate: after lifting the interface Cauchy jumps of $w$ to a globally admissible impedance error,
\begin{equation}\label{eq:contstab}
 \|w\|_{1,\kappa,A,\eta}
 \le C_{\rm imp}(\kappa,A,\eta)
 \left(\|\Lop w\|_{-1,h}+C_{\rm lift}\mathcal S_h(w)\right),
\end{equation}
where $C_{\rm lift}$ is bounded on the shape-regular patch family.  This assumption concerns the continuous impedance stability and the broken Cauchy lifting.  The analysis does not require a wavenumber-uniform bound for $C_{\rm imp}$ in a general heterogeneous anisotropic medium: its dependence on $\kappa$, the domain, and the coefficient geometry belongs to the continuous resolvent problem.  Likewise, $C_{\rm lift}$ is a geometry-dependent lifting constant controlled on the fixed shape-regular patch family.  The pollution study below probes the discrete resolution factor $\kappa h/p$ in \eqref{eq:physicalrel}; it does not estimate these continuous constants.  The discrete quasi-Trefftz contribution is handled by the explicit residual decomposition above.

\begin{proposition}[Physical-energy reliability]\label{prop:reliable}
Assume \eqref{eq:contstab}.  Let
\[
 \chi_h:=\max_{P\in\Patches_h}\frac{\kappa h_P}{p}.
\]
Then every error $w=u-v_h$ with $v_h\in\mathcal V_h^f$ satisfies
\begin{equation}\label{eq:physicalrel}
 \|w\|_{1,\kappa,A,\eta}
 \le C_{\rm imp}
 \left[
 \bigl(C_{\rm lift}+C_{\rm app}\chi_h\bigr)\|w\|_{G,h}
 +\mathcal D_{\rm AAD}(w)+\mathcal R_{>p}(w)
 \right],
\end{equation}
where $\mathcal D_{\rm AAD}$ is absent for an exact-coefficient quasi-Trefftz space.  Consequently the discrete solution satisfies
\begin{equation}\label{eq:physicalerror}
 \|u-u_h\|_{1,\kappa,A,\eta}
 \le C_{\rm imp}
 \left[
 \bigl(C_{\rm lift}+C_{\rm app}\chi_h\bigr)
 \inf_{v_h\in\mathcal V_h^f}\|u-v_h\|_{G,h}
 +\mathcal D_{\rm AAD}(u-u_h)+\mathcal R_{>p}(u-u_h)
 \right].
\end{equation}
For exact polynomial media whose physical residual is represented in the degree-$p$ channel, both additional terms vanish and \eqref{eq:physicalerror} is an explicit graph-to-energy transfer up to the continuous impedance and lifting constants.  For smooth nonpolynomial media, \Cref{lem:restail,prop:aadphysical} provide explicit bounds for the two additional terms.
\end{proposition}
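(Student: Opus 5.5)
We start from the assumed stability estimate \eqref{eq:contstab} applied to $w=u-v_h$. The Cauchy-trace and physical-boundary part gives $C_{\rm lift}\mathcal S_h(w)\le C_{\rm lift}\|w\|_{G,h}$ directly, because $\mathcal S_h(w)$ is made of components of $\mathcal B_hw$. The exact solution $u$ satisfies the transmission and boundary conditions, so these components are exactly the data mismatches of $v_h$. The remaining work is to bound $\|\Lop w\|_{-1,h}$ patch by patch.

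On each patch $P$ we split the residual orthogonally into three pieces:
\[
 \Lop w=\Pi_{p-2}^P\Lop w+(I-\Pi_{p-2}^P)\Pi_p^P\Lop w+(I-\Pi_p^P)\Lop w .
\]
This uses that $\Pi_{p-2}^P\Pi_p^P=\Pi_{p-2}^P$ for nested broken spaces. Taking $H^{-1}(P)$ norms, the triangle inequality and then summation over patches bounds $\|\Lop w\|_{-1,h}$ by the sum of three contributions:
\begin{itemize}
\item the first piece gives exactly $\mathcal D_{\rm AAD}(w)$ by \eqref{eq:lowdef};
\item the third piece gives exactly $\mathcal R_{>p}(w)$ by \eqref{eq:restail};
\item the middle piece is a polynomial in $Y_p(P)$ orthogonal to $Y_{p-2}(P)$, so \Cref{lem:negpoly} applies.
\end{itemize}
For the middle piece, \Cref{lem:negpoly} gives
\[
 \|(I-\Pi_{p-2}^P)\Pi_p^P\Lop w\|_{H^{-1}(P)}\le C_{\rm app}\frac{h_P}{p}\|(I-\Pi_{p-2}^P)\Pi_p^P\Lop w\|_{L^2(P)}.
\]
Since $\Lop u=f$, this $L^2$ norm equals $\|\rho_{P,p}^\perp(v_h;f)\|_{L^2(P)}$ up to sign. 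It is the Riesz representative of the retained cell residual, which appears in \eqref{eq:J} with weight $\kappa^{-2}$.

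Writing $h_P/p=(\kappa h_P/p)\,\kappa^{-1}\le\chi_h\kappa^{-1}$, the middle contribution is bounded by $C_{\rm app}\chi_h$ times $\kappa^{-1}\|\rho_{P,p}^\perp\|_{L^2(P)}$. Squaring and summing over patches, this is at most $C_{\rm app}\chi_h\|w\|_{G,h}$, since the cell block of $\mathcal B_hw$ is exactly these weighted terms. Collecting the three bounds gives \eqref{eq:physicalrel}. The estimate \eqref{eq:physicalerror} then follows by taking $v_h=u_h$ and invoking \Cref{thm:quasi}, which gives $\|u-u_h\|_{G,h}=\inf_{v_h}\|u-v_h\|_{G,h}$. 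For an exact-coefficient quasi-Trefftz space, $\Pi_{p-2}^P\Lop w=0$, so $\mathcal D_{\rm AAD}$ is absent.

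\textbf{Main obstacle.} The delicate step is identifying the middle piece with the graph-functional cell channel. $\mathcal B_h$ is built from true-coefficient residual moments, and $\mathcal B_hw$ must equal $\mathcal B_hv_h$ minus data, with the source $f$ cancelling consistently. I would check this from the definition of $\rho^\perp_{P,p}$ using $\Lop u=f$, so that no AAD quantity enters this channel; the AAD defect is confined to the first piece. A second point to verify is that the $L^2(P)$-to-$H^{-1}(P)$ estimates in the two lemmas are compatible with the broken norm $\|\cdot\|_{-1,h}$ in \eqref{eq:contstab}. They are, because both are sums over patches of local $H^{-1}(P)$ norms.
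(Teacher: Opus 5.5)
Your proposal is correct and matches the paper's proof step for step. It uses the same three-way split of $\Lop w$, applies \Cref{lem:negpoly} to the middle piece with the $\kappa^{-1}$ rescaling of the cell channel to get $C_{\rm app}\chi_h\|w\|_{G,h}$, bounds the Cauchy lifting by $C_{\rm lift}\mathcal S_h(w)\le C_{\rm lift}\|w\|_{G,h}$, and invokes \Cref{thm:quasi} for \eqref{eq:physicalerror}. The details you add (the nestedness $\Pi_{p-2}^P\Pi_p^P=\Pi_{p-2}^P$, the sign identification with $\rho_{P,p}^\perp$ via $\Lop u=f$, and Minkowski's inequality over patches) are exactly what the paper's terser argument leaves implicit.
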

\begin{proof}
On each patch decompose the physical residual into three mutually separated pieces,
\[
 \Lop w
 =\Pi_{p-2}^P\Lop w
 +(I-\Pi_{p-2}^P)\Pi_p^P\Lop w
 +(I-\Pi_p^P)\Lop w.
\]
The middle term lies in the $L^2$-orthogonal complement of $Y_{p-2}(P)$, so \Cref{lem:negpoly} gives
\[
 \|(I-\Pi_{p-2}^P)\Pi_p^P\Lop w\|_{H^{-1}(P)}
 \le C_{\rm app}\frac{h_P}{p}
 \|(I-\Pi_{p-2}^P)\Pi_p^P\Lop w\|_{L^2(P)}.
\]
Because the graph residual scales its represented volume channel by $\kappa^{-1}$, summing over patches bounds this contribution by $C_{\rm app}\chi_h\|w\|_{G,h}$.  The first and third pieces are exactly the terms \eqref{eq:lowdef} and \eqref{eq:restail}.  The Cauchy lifting is bounded by $C_{\rm lift}\mathcal S_h(w)\le C_{\rm lift}\|w\|_{G,h}$.  Substitution in \eqref{eq:contstab} proves \eqref{eq:physicalrel}; \eqref{eq:physicalerror} follows from \Cref{thm:quasi}.
\end{proof}

\begin{corollary}[Exact constant-coefficient case]\label{cor:constantphysical}
Assume \eqref{eq:contstab}, and suppose that $A$ and $\eta$ are constant on each macro-patch and that the source is represented exactly in $Y_p(P)$.  Then the low-order coefficient defect and the residual tail vanish, and
\begin{equation}\label{eq:constantphysical}
 \|u-u_h\|_{1,\kappa,A,\eta}
 \le C_{\rm imp}
 \bigl(C_{\rm lift}+C_{\rm app}\chi_h\bigr)
 \inf_{v_h\in\mathcal V_h^f}\|u-v_h\|_{G,h}.
\end{equation}
In particular, the discrete graph-to-energy amplification is explicit in $\chi_h=\max_P \kappa h_P/p$; any additional frequency dependence is confined to the continuous impedance and lifting constants.
\end{corollary}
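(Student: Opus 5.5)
The corollary is a specialization of \Cref{prop:reliable}. The plan is to apply \eqref{eq:physicalerror} and show that, under the stated hypotheses, both additional terms $\mathcal D_{\rm AAD}(u-u_h)$ and $\mathcal R_{>p}(u-u_h)$ vanish.

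\emph{Step 1: the low-order defect.} Since $A$ and $\eta$ are constant on each macro-patch, AAD accepts the degree-zero projection, so $A_m=A$ and $\eta_m=\eta$ exactly. Hence $\Lop^{(m)}=\Lop$ and $\varepsilon_{A,0,P}=\varepsilon_{A,1,P}=\varepsilon_{\eta,P}=0$. The source is represented exactly, so $f_m=f$. Then \eqref{eq:lowdefsplit} gives $\Pi_{p-2}^P\Lop(u-u_h)=0$ on every patch. Equivalently, every term on the right of \eqref{eq:aadphysical} in \Cref{prop:aadphysical} is zero, so $\mathcal D_{\rm AAD}(u-u_h)=0$.

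\emph{Step 2: the residual tail.} I would use the equivalent form noted after \eqref{eq:restail}: $\Lop(u-u_h)=f-\Lop u_h$. Each triangle restriction $u_h|_K$ is a polynomial of degree $p$. With constant $A$ and $\eta$, $\Lop u_h|_K=-\nabla\cdot(A\nabla u_h)-\kappa^2\eta u_h$ is therefore a polynomial of degree at most $p$: the second-order part has degree at most $p-2$, and the reaction part has degree at most $p$. By hypothesis $f\in Y_p(P)$. Hence $f-\Lop u_h\in Y_p(P)$, so $(I-\Pi_p^P)\Lop(u-u_h)=0$ and $\mathcal R_{>p}(u-u_h)=0$.

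\emph{Step 3: conclusion.} Substituting both vanishing terms into \eqref{eq:physicalerror} gives \eqref{eq:constantphysical}. The statement that the dependence on frequency is explicit through $\chi_h$ is then just a reading of the prefactor.

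The one point needing care is the meaning of the residual on a broken function. The residual must be read patchwise, as in the definition of $\|\cdot\|_{-1,h}$ and \eqref{eq:restail}, and on each triangle separately. Interface and boundary mismatches, including the distributional contributions of $u_h$ across patch interfaces, are carried by $\mathcal S_h$ through the lifting in \eqref{eq:contstab} and are not part of the volume terms. Inside a patch, $u_h$ is $C^1$ and $A$ is constant, so the conormal traces on internal triangle edges match. No distributional edge residual therefore arises there, and the elementwise polynomial computation of Step 2 is legitimate.
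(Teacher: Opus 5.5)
Your proposal is correct and follows the paper's own proof: you specialize \eqref{eq:physicalerror} from \Cref{prop:reliable}, obtain $\mathcal D_{\rm AAD}(u-u_h)=0$ from \eqref{eq:lowdefsplit} under exact coefficient and source representation, and obtain $\mathcal R_{>p}(u-u_h)=0$ because, with constant $A$ and $\eta$, the patchwise residual $f-\Lop u_h$ lies in $Y_p(P)$. Your remark that the strong $C^1$ join inside each patch rules out a distributional edge contribution to $\Lop u_h$ is a detail the paper leaves implicit.
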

\begin{proof}
For constant $A$ and $\eta$, the residual of a degree-$p$ patch polynomial has degree at most $p$.  Exact coefficient and source representation gives $\mathcal D_{\rm AAD}=0$, while $\mathcal R_{>p}=0$.  Equation \eqref{eq:constantphysical} follows from \Cref{prop:reliable}.
\end{proof}

\begin{remark}[Connection with the pollution experiment]\label{rem:physicalpollution}
The factor $\chi_h=\max_P \kappa h_P/p$ is, up to the uniform equivalence between patch and triangle diameters on the paired mesh family, the resolution parameter varied in the pollution study below.  The estimate is a reliability statement whose constants $C_{\rm imp}$, the lifting constant, and the smoothness seminorm in \eqref{eq:restailsmooth} can carry frequency dependence.  The estimate identifies the discrete mechanism explicitly.  Keeping $\kappa h/p$ bounded prevents the represented high-order residual from acquiring an additional resolution-dependent amplification when graph control is transferred to the physical energy norm.
\end{remark}

The role of AAD can now be separated from discretization error.  Let $\mathcal V_h^{f,{\rm ex}}$ denote the hypothetical affine trial set formed with exact coefficients and exact source moments, and let $\mathcal V_h^f$ be the implemented AAD trial set in \eqref{eq:broken}.  Suppose the graph residual of a patch polynomial is continuous with respect to its scaled Bernstein coefficient vector, with constant $M_{P,G}$.  Combining \Cref{prop:approx,thm:quasi} gives the Strang-type decomposition
\begin{align}
 \|u-u_h\|_{G,h}
 &\le \inf_{v_h\in \mathcal V_h^{f,{\rm ex}}}\|u-v_h\|_{G,h}
 +\left(\sum_{P\in\Patches_h}M_{P,G}^2\delta_P^2\|a_P(v_h^*)\|_2^2\right)^{1/2}
 +\mathcal E_f,\label{eq:strangcoeff}
\end{align}
where $v_h^*\in\mathcal V_h^{f,{\rm ex}}$ is any exact-coefficient comparator and $\mathcal E_f$ is zero when the source moments use the true $f$; otherwise it is controlled by \eqref{eq:sourcepert}.  Hence coefficient approximation preserves the graph best-approximation mechanism and contributes a separately controllable term whose natural local indicator is
\begin{equation}\label{eq:aadindicator}
 \boxed{\quad \delta_P\simeq
 \frac{\|C_P-\widetilde C_P\|_2}{\sigma_{\min}^{+}(C_P)}.\quad}
\end{equation}
In computations $C_P$ is unavailable because it would require the exact nonpolynomial operator, but the numerator can be estimated by successive AAD degrees and the denominator by the retained singular gap of the accepted matrix.  This suggests the practical stopping test
\begin{equation}\label{eq:aadstop}
 \frac{\|\widetilde C_P^{(m+2)}-\widetilde C_P^{(m)}\|_2}
 {\sigma_{\min}^{+}(\widetilde C_P^{(m+2)})}
 \le \tau_{\rm AAD},
\end{equation}
in addition to the coefficient-space test \eqref{eq:coefftol}.  The experiments below use the coefficient-space test; \eqref{eq:aadstop} is the more directly operator-relevant criterion for future adaptive implementations.

\section{Algebra and implementation}\label{sec:impl}
The implementation mirrors the two local reductions and then stays in reduced coordinates.  This section records the algebra in more detail.  The Bernstein--B\'ezier continuation formula makes the conformity stage explicit, and all geometry enters through affine maps and local coefficient samples.

\subsection{Explicit $C^1$ synthesis on a two-triangle patch}
Let the two triangles of a patch share the edge $\langle v_1,v_2\rangle$, with opposite vertices $v_3$ and $\widetilde v_3$.  Once the B-coefficients on the first triangle and the coefficients in the second triangle at domain points at least two rows away from the common edge are chosen, the edge row and first adjacent row on the second triangle are fixed by the smoothness relations of \Cref{subsec:c1edge}.  These free coefficients therefore give an explicit coordinate system for $S_p^1(P)$ of dimension $p^2+p+1$.  We denote the corresponding sparse synthesis matrix by $Z_P$ and write
\[
 c_P=Z_Pa_P.
\]
The kernel $\ker H_P$ is obtained directly from the explicit continuation map.  The numerical experiment in \Cref{sec:c1check} verifies directly that this explicit synthesis spans the same $C^1$ space as a rank-revealing nullspace computation to principal-angle accuracy near roundoff.

\subsection{Local quasi-Trefftz reduction and particular lift}
In the $C^1$ coordinates the projected residual moments form a matrix
\[
 C_P\in\C^{m_P\times n_P},\qquad n_P=\dim S_p^1(P).
\]
For a full-row-rank two-triangle patch, $m_P=p(p-1)$ and $n_P-m_P=2p+1$.  We use a full QR factorization of the adjoint,
\[
 C_P^*=Q_P^{\rm row}R_P,
\]
with the complete unitary factor retained.  The columns of the complementary block of $Q_P^{\rm row}$ form an orthonormal basis $W_P$ of $\ker C_P$.  The homogeneous quasi-Trefftz B-coefficient basis is therefore
\[
 Q_P=Z_PW_P.
\]
The inhomogeneous particular lift is obtained from the same residual equations by a stable least-squares solve.  Thus every patch field is represented as
\begin{equation}\label{eq:reducedpatch}
 c_P=c_P^f+Q_Pz_P,
 \qquad r_P:=\dim V_p^{\rm qT}(P).
\end{equation}
On every full-row-rank two-triangle patch in the experiments, $r_P=2p+1$.  A direct elimination based on a predetermined set of interior B-coefficients was also tested.  Its square pivot block deteriorates rapidly, with representative condition numbers growing from about $10^4$ at $p=6$ to about $10^{12}$ at $p=14$.  The adjoint-QR construction is therefore retained; in the same tests the reduced basis condition number remains about $1.4$--$1.6$ and the residual nullspace relation is satisfied at roundoff.

\subsection{Reference-triangle algebra and arbitrary affine meshes}
Reference Bernstein values, derivative maps, degree maps, and quadrature rules are precomputed once for each degree.  A physical triangle contributes only its affine Jacobian, inverse-transpose factors, barycentric gradients, edge orientation, and coefficient samples.  The $C^1$ synthesis map uses the barycentric coordinates of the opposite vertex and is therefore affine-local.  The Bernstein--B\'ezier smoothness machinery is affine-local and applies to arbitrary noncongruent triangles.

The adaptive coefficient projection is also triangulation independent.  The reference $L^2$ projection matrices are precomputed, and triangles that accept the same algebraic degree are processed in batches.  Since the affine Jacobian determinant occurs on both sides of the local projection equations, the same reference projection system applies on every triangle.  Residual moments are assembled in batches after the coefficient samples and geometry factors have been collected.

\subsection{Reduced global graph assembly}
Once \eqref{eq:reducedpatch} has been constructed, all expensive global operations remain in the reduced coordinates.  Interface value traces and conormal fluxes are evaluated directly in the $r_P$ reduced coordinates.  The global volume residual is similarly formed only after contraction with the reduced basis.  Moreover, the optimized volume channel tests the orthogonal complement of the degree-$(p-2)$ residual directions already annihilated locally.  That complement has dimension $2p+1$ on each triangle.  The low-order exact-coefficient contribution is precisely the AAD/source defect estimated in \Cref{prop:aadphysical}; hence this optimization follows the physical-norm decomposition directly.

If $R_h$ denotes the assembled graph-residual operator and $g_h$ the data vector, the discrete problem is
\begin{equation}\label{eq:ls}
 \min_z\norm{R_hz-g_h}_2^2,
 \qquad R_h^*R_hz=R_h^*g_h.
\end{equation}
Only neighboring patches share interface rows, so the normal matrix has the sparsity graph of the macro-patch adjacency graph.  The number of active global unknowns is
\begin{equation}\label{eq:dimension}
 N_{\rm active}=\sum_{P\in\Patches_h}r_P,
 \qquad
 N_{\rm active}=N_{\rm patch}(2p+1)
\end{equation}
for the full-row-rank two-triangle patches used in this paper.

\subsection{Equivalence and timing checks}
Each optimization was checked against the preceding unreduced implementation before being retained.  The explicit $C^1$ continuation and a rank-revealing QR produce local spline spaces whose largest principal angle is about $10^{-15}$, with smoothness residuals at roundoff.  At high degree the direct B-coefficient construction reduces the cost of the conformity stage by more than two orders of magnitude.

Batched coefficient projection reproduces the same selected approximation degree and coefficient error as triangle-by-triangle projection, while batched residual assembly and restriction to the surviving residual complement reduce the dominant local and volume work.  The archived benchmarks quantify the savings: explicit $C^1$ synthesis is about $100$--$170$ times faster than rank-revealing nullspace formation for $p=12$--$16$; batched global assembly is about $3.8$ times faster in representative $p=8$--$10$ cases; and restricting the volume channel to the surviving residual complement reduces that volume work by factors between $3.5$ and $5.8$.  In the exact-circle penetrable test at $\kappa=40$, $p=8$, and $200$ patches, optimized preprocessing takes $0.95$ s, global assembly $2.58$ s, and the sparse solve $1.24$ s for $3400$ active coordinates.  The archived equivalence tests compare each optimized stage with its unreduced predecessor; assembled matrices agree to roundoff-level relative differences, and the resulting solution vectors agree at approximately $10^{-14}$.  None of these optimizations uses congruent macro-patches; the same code path is used on all unstructured meshes below.

\section{Numerical experiments}\label{sec:num}
All global experiments use unstructured triangulations of the unit square $\Omega=(0,1)^2$.  Interior vertices are perturbed deterministically and the resulting point set is triangulated by Delaunay; adjacent triangles are then paired into nonoverlapping two-triangle macro-patches.  The local construction applies to the resulting noncongruent macro-patches.  In the resolution plots, $h$ denotes the maximum triangle diameter.  Because every macro-patch contains two adjacent triangles from the same shape-regular family, this quantity is uniformly equivalent to the patch diameter used in the analysis.  The thin lines in \Cref{fig:unstructured-patches} are triangle edges and the thick lines are macro-patch boundaries.  Within each colored patch the two polynomial pieces are joined strongly in $C^1$ by the Bernstein--B\'ezier smoothness conditions.  Across a thick patch boundary, neighboring patches communicate through the value trace and conormal flux in the graph residual.

The implementation used in this section is the triangulation-independent optimized code.  The $C^1$ synthesis map is formed explicitly from the B-coefficient continuation relation, coefficient projection is batched by Bernstein degree, residual moments are assembled in batches, the quasi-Trefftz kernel is obtained from a stable QR factorization of the adjoint local residual map, and global work is performed in the reduced $2p+1$ patch coordinates.  The global volume channel tests only the orthogonal complement of the residual directions already annihilated locally.  All reported errors are evaluated independently by higher-order quadrature.  The reported relative $L^2$ error is $\|u-u_h\|_{L^2(\Omega)}/\|u\|_{L^2(\Omega)}$, and the relative $H^1$ error is normalized analogously by the exact $H^1$ seminorm.

\begin{figure}[htbp]
\centering
\includegraphics[width=.60\linewidth]{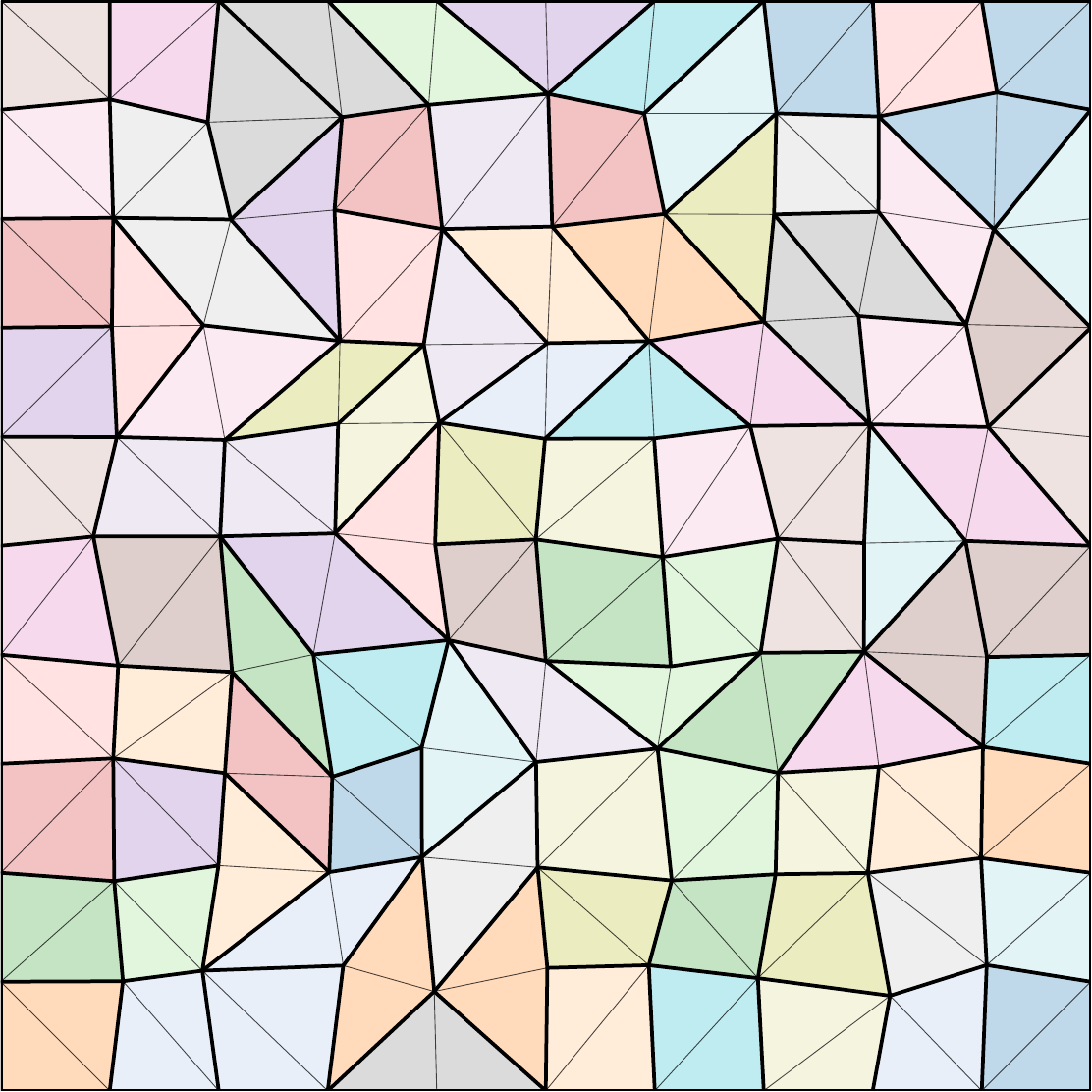}
\caption{Representative unstructured triangulation and two-triangle macro-patches used in the experiments.  Each shaded region is one macro-patch.  Thin lines are triangle edges; thick lines are patch boundaries.  The local space is strongly $C^1$ across the thin interior edge of each patch, while continuation across the thick interfaces is imposed weakly through the Cauchy graph residual.}
\label{fig:unstructured-patches}
\end{figure}

\subsection{Strong \texorpdfstring{$C^1$}{C1} conformity on noncongruent patches}\label{sec:c1check}
The first experiment checks the defining geometric property of the method before any approximation result is considered.  On every internal edge of every macro-patch we compare the two polynomial value traces and the two derivatives in the same canonical normal direction.  With the trace and normal-derivative maps defined in \Cref{subsec:c1matrix}, let $\varepsilon_0$ denote the relative value-trace mismatch and let $\varepsilon_1$ denote the corresponding derivative mismatch after multiplication by $h_e/p$.

\Cref{tab:c1unstruct} reports the worst mismatch over all patches of a representative unstructured mesh.  It also reports the range of nonzero-singular-value condition numbers of the scaled local smoothness matrix.  The complete $C^1$ spline space has the expected dimension $p^2+p+1$, and after the PDE reduction the quasi-Trefftz space has dimension $2p+1$ whenever the residual-moment map has full row rank.  Both the full spline basis and the reduced basis satisfy the smoothness conditions to roundoff.

\begin{table}[htbp]
\centering
\caption{Strong $C^1$ conformity and local smoothness conditioning on noncongruent two-triangle patches.  Errors are maxima over all patches in the mesh; the condition interval is the range of $\kappa_2^+(H_P)$.}
\label{tab:c1unstruct}
\begin{tabular}{rrrrrrr}
\toprule
$p$ & $\dim S_p^1(P)$ & max $\varepsilon_0$ & max $\varepsilon_1$ & max $\varepsilon_0(V_p^{\rm qT})$ & max $\varepsilon_1(V_p^{\rm qT})$ & condition range\\
\midrule
2  & 7   & $3.71\times10^{-16}$ & $4.05\times10^{-16}$ & -- & -- & $1.36$--$2.67$\\
4  & 21  & $2.50\times10^{-16}$ & $4.31\times10^{-16}$ & $2.84\times10^{-16}$ & $4.17\times10^{-16}$ & $1.37$--$2.79$\\
6  & 43  & $1.77\times10^{-16}$ & $2.01\times10^{-16}$ & $2.15\times10^{-16}$ & $2.90\times10^{-16}$ & $1.38$--$2.82$\\
8  & 73  & $1.80\times10^{-16}$ & $2.98\times10^{-16}$ & $2.16\times10^{-16}$ & $3.43\times10^{-16}$ & $1.38$--$2.83$\\
10 & 111 & $1.55\times10^{-16}$ & $2.10\times10^{-16}$ & $1.90\times10^{-16}$ & $3.11\times10^{-16}$ & $1.38$--$2.84$\\
12 & 157 & $1.51\times10^{-16}$ & $1.77\times10^{-16}$ & $1.46\times10^{-16}$ & $2.67\times10^{-16}$ & $1.38$--$2.84$\\
16 & 273 & $1.37\times10^{-16}$ & $2.35\times10^{-16}$ & -- & -- & $1.38$--$2.85$\\
\bottomrule
\end{tabular}
\end{table}

This experiment also clarifies the two different notions of continuation in the method.  Inside a macro-patch, equality of the edge B-coefficients fixes the common trace and the first-row smoothness relation fixes the transverse derivative, hence the local function is strongly $C^1$.  Across a macro-patch boundary, the neighboring traces are coupled through minimization of their value and conormal-flux mismatches.  The thick interfaces in \Cref{fig:unstructured-patches} therefore separate strong local spline conformity from weak global Cauchy coupling.

\subsection{Conditioning of the reduced global system}\label{sec:conditioning}
The local smoothness construction is very well conditioned, while the global least-squares matrix additionally contains propagation and interface effects.  If $R_h$ is the assembled graph-residual operator, the Hermitian system matrix is $K_h=R_h^*R_h$, so $\kappa_2(K_h)=\kappa_2(R_h)^2$.  For comparison we also form the diagonally equilibrated matrix $K_h^{\rm eq}=D^{-1/2}K_hD^{-1/2}$ with $D=\operatorname{diag}(K_h)$.  \Cref{tab:graphcond} reports representative unstructured cases.  The residual operator remains moderately conditioned in the tested range, and the simple diagonal scaling reduces the normal-matrix condition in every displayed case.

\begin{table}[htbp]
\centering
\caption{Conditioning of the graph-residual system on unstructured macro-patch meshes.}
\label{tab:graphcond}
\begin{tabular}{rrrrrrr}
\toprule
$\kappa$ & patches & $p$ & active & $\kappa_2(K_h)$ & $\kappa_2(K_h^{\rm eq})$ & $\kappa_2(R_h)$\\
\midrule
20 & 1  & 4  & 9   & $9.42$ & $8.42$ & $3.07$\\
20 & 1  & 8  & 17  & $6.78\times10^1$ & $6.48\times10^1$ & $8.24$\\
20 & 9  & 8  & 153 & $3.26\times10^2$ & $2.75\times10^2$ & $18.1$\\
20 & 16 & 8  & 272 & $2.13\times10^3$ & $8.92\times10^2$ & $46.2$\\
20 & 25 & 10 & 525 & $1.90\times10^3$ & $1.60\times10^3$ & $43.6$\\
40 & 25 & 8  & 425 & $1.37\times10^3$ & $9.94\times10^2$ & $37.0$\\
\bottomrule
\end{tabular}
\end{table}

\subsection{A matrix-valued heterogeneous medium and unstructured refinement}\label{sec:hf}
The remaining experiments use one fixed smooth anisotropic medium and one fixed oscillatory exact solution, so every part of the method is active simultaneously: local $C^1$ synthesis, coefficient projection for basis construction, conormal fluxes with a matrix field, the affine particular lift, and the reduced global graph residual.  The coefficient field is the symmetric positive definite matrix
\[
A(x,y)=\begin{pmatrix}
1.4+0.25\sin(2\pi x)\cos(\pi y) & 0.12\sin(\pi x)\sin(\pi y)\\
0.12\sin(\pi x)\sin(\pi y) & 1.2+0.20\cos(\pi x)\sin(2\pi y)
\end{pmatrix},
\]
with scalar index
\[
\eta(x,y)=1.10+0.15e^{0.4x-0.3y}+0.10\sin(\pi xy),
\]
and the manufactured wave field
\[u(x,y)=e^{i\kappa\phi(x,y)},
\]
with curved phase
\[
\phi(x,y)=x+0.35y+0.05\sin(2\pi x)\sin(\pi y).
\]
This example exercises a genuinely nonpolynomial, matrix-valued heterogeneous medium with spatially varying anisotropy.

\begin{figure}[htbp]
\centering
\includegraphics[width=.98\linewidth]{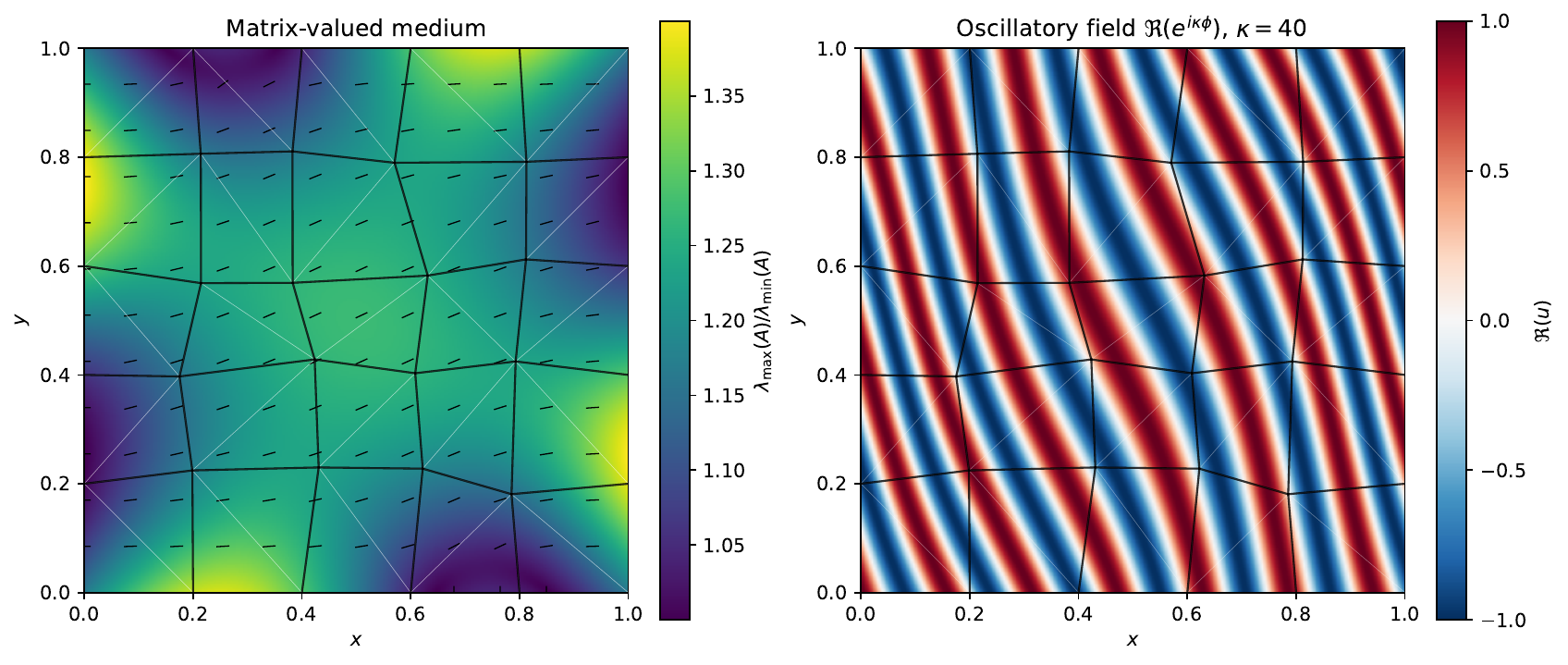}
\caption{The matrix-valued heterogeneous test problem used in the remaining experiments.  Left: anisotropy ratio $\lambda_{\max}(A)/\lambda_{\min}(A)$ with principal directions overlaid, together with the unstructured macro-patch boundaries.  Right: planar rendering of the manufactured exact oscillatory field $\Re(e^{i\kappa\phi(x,y)})$ at $\kappa=40$ on the same domain, using a centered blue--white--red colormap on the unstructured macro-patch geometry.}
\label{fig:matrix-medium}
\end{figure}

\Cref{fig:matrix-medium} makes the geometry of the example explicit.  The left panel shows that the coefficient field rotates and changes strength across the domain, while the right panel shows the curved wave pattern that drives the forcing and impedance data.  The refinement experiment in \Cref{fig:unstructured-refinement} solves exactly this matrix-valued problem with the optimized reduced-coordinate implementation and varies only the unstructured mesh.  The active dimension remains exactly $17$ per patch.

\begin{figure}[htbp]
\centering
\includegraphics[width=.62\linewidth]{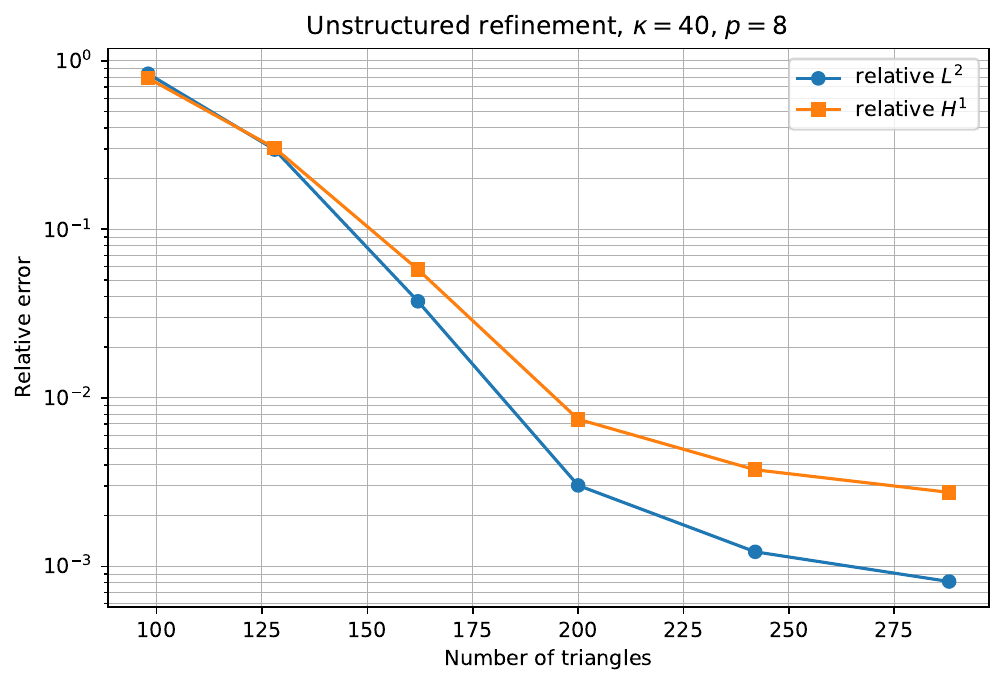}
\caption{Refinement on the unstructured macro-patch family for $\kappa=40$ and $p=8$ in the matrix-valued heterogeneous medium of \Cref{fig:matrix-medium}.}
\label{fig:unstructured-refinement}
\end{figure}

\begin{table}[htbp]
\centering
\caption{Representative points from the unstructured refinement run at $\kappa=40$, $p=8$.}
\label{tab:unstruct-refine}
\begin{tabular}{rrrrr}
\toprule
triangles & patches & active & relative $L^2$ & core time (s)\\
\midrule
128 & 64  & 1088 & $2.97\times10^{-1}$ & 0.48\\
162 & 81  & 1377 & $3.74\times10^{-2}$ & 0.70\\
200 & 100 & 1700 & $3.02\times10^{-3}$ & 0.66\\
242 & 121 & 2057 & $1.22\times10^{-3}$ & 0.97\\
288 & 144 & 2448 & $8.13\times10^{-4}$ & 1.07\\
\bottomrule
\end{tabular}
\end{table}

The convergence persists across noncongruent patches with varying triangle shapes, while the local dimension and the $C^1$ construction remain unchanged.

\subsection{Fixed-mesh degree resolution and high wavenumber}\label{sec:presolution}
To separate mesh refinement from local polynomial resolution, we next fix one unstructured mesh with 144 macro-patches and vary only $p$.  \Cref{fig:fixed-h-p} extends the degree sweep well beyond the initial range: the $\kappa=40$ and $\kappa=120$ curves are continued through $p=15$, while the $\kappa=80$ curve is continued through $p=16$.  At $\kappa=40$ the error decreases rapidly once the method enters the resolved regime and reaches $7.04\times10^{-8}$ at $p=15$.  At $\kappa=80$ a clear transition appears near $\kappa h/p\approx1$: the error initially remains large, then drops sharply to $1.38\times10^{-3}$ at $p=16$.  At $\kappa=120$ the fixed mesh remains underresolved through $p=15$, so the high-wavenumber curve records the persistence of the resolution barrier on this mesh.

\begin{figure}[htbp]
\centering
\includegraphics[width=.62\linewidth]{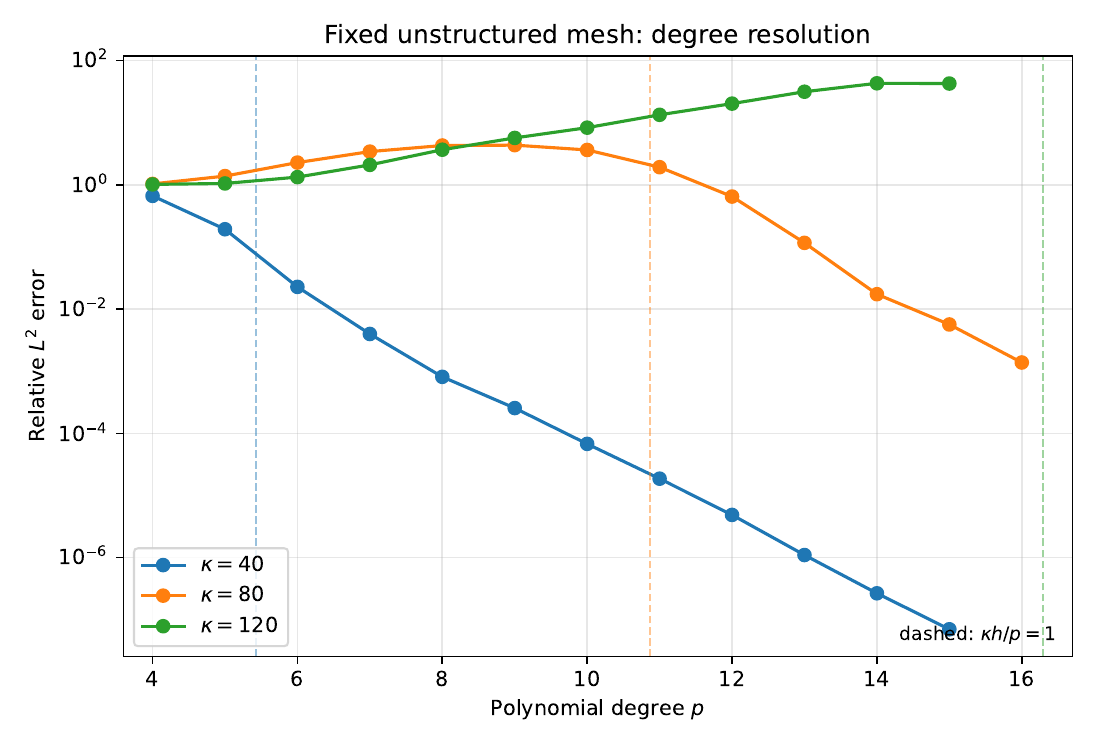}
\caption{Relative $L^2$ error versus polynomial degree on one fixed unstructured mesh with 144 macro-patches.  The degree sweep extends through $p=15$ for $\kappa=40$ and $120$ and through $p=16$ for $\kappa=80$.  Dashed vertical lines mark $\kappa h/p=1$ for each wavenumber; the change in slope tracks the local resolution threshold.}
\label{fig:fixed-h-p}
\end{figure}

Higher degrees substantially improve the high-frequency approximation after the local resolution threshold is crossed.  On the fixed 144-patch mesh at $\kappa=80$, increasing the degree from $p=10$ to $16$ reduces the relative $L^2$ error from $3.65$ to $1.38\times10^{-3}$ while the active dimension grows from $3024$ to $4752$.  The $\kappa=40$ curve continues its near-geometric decay through $p=15$, whereas the $\kappa=120$ curve remains unresolved through the largest degree reached on this mesh.  A separate finer-mesh computation at $\kappa=120$ with 256 patches and $p=16$ gives relative $L^2$ error $5.79\times10^{-2}$ with 8448 active coordinates, confirming that the remaining limitation is resolution rather than a failure of the local reduction.

\subsection{Pollution at fixed resolution}\label{sec:pollution}
To probe pollution separately from simple under-resolution, approximation resolution must remain approximately fixed while wavenumber increases.  We therefore fix $p$, choose a target value of $\kappa h/p$, and vary the unstructured mesh with $\kappa$ so that the achieved resolution parameter remains close to that target.  We use ``pollution'' here for frequency-dependent error growth observed along such fixed-resolution sequences, distinct from the large approximation error of an underresolved single mesh.  The complete computational sweep contains four target levels and ten wavenumbers per level; the main paper shows only the two informative regimes, $\kappa h/p\approx0.5$ and $0.8$.  The intermediate levels are retained in the accompanying data archive.

\begin{figure}[htbp]
\centering
\includegraphics[width=.90\linewidth]{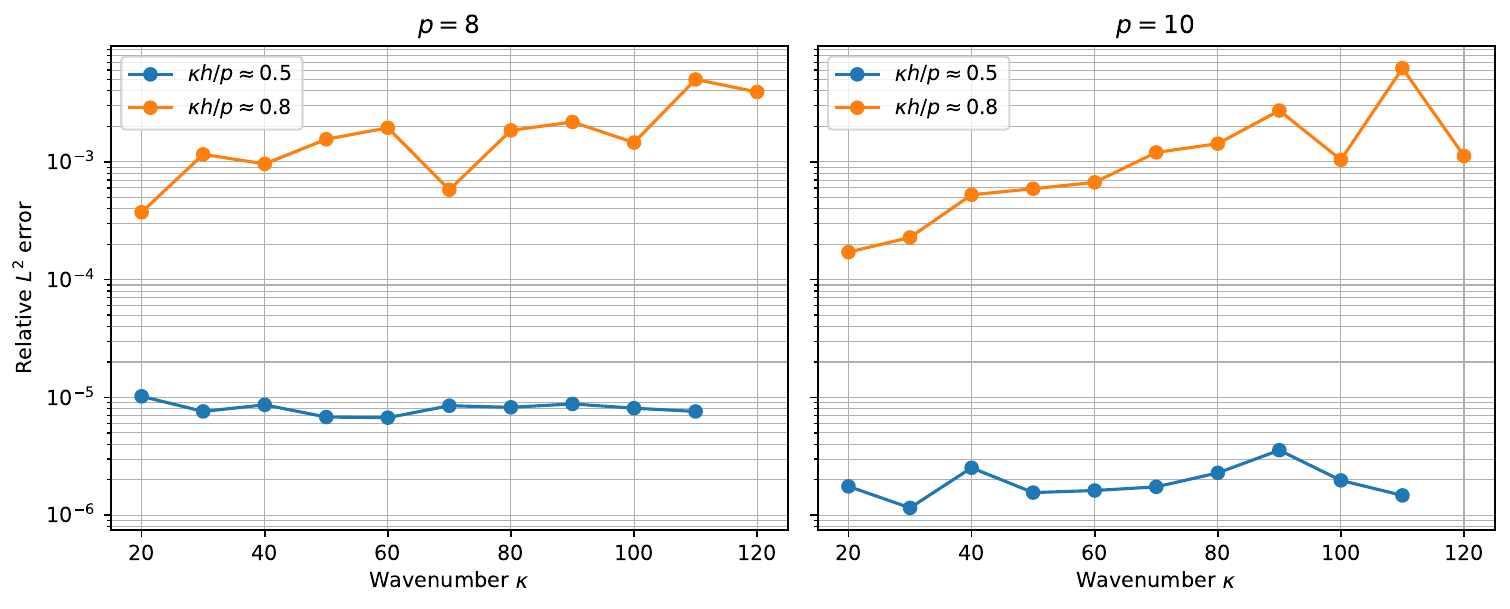}
\caption{Pollution study on unstructured meshes.  Each panel fixes the polynomial degree and compares a well-resolved line, $\kappa h/p\approx0.5$, with a near-threshold line, $\kappa h/p\approx0.8$.  Ten wavenumbers from $20$ through $110$ are used on every line; the $0.8$ lines are additionally extended to $\kappa=120$.}
\label{fig:pollution}
\end{figure}

The fitted power laws are used only as descriptive summaries of these finite sweeps.  At $\kappa h/p\approx0.5$ the errors remain essentially flat: from $\kappa=20$ to $110$ the endpoint error ratio is $0.74$ for $p=8$ and $0.84$ for $p=10$.  At $\kappa h/p\approx0.8$ the corresponding ratios are $13.4$ and $36.4$, respectively; descriptive log--log fits give exponents $0.95$ and $1.78$.  Thus the near-threshold lines show clear frequency-dependent growth, while the well-resolved lines remain essentially flat.  The experiment provides a practical separation between the resolved and polluted regimes; the fitted exponents serve as finite-range descriptive summaries.

The archived data also contain the intermediate targets $\kappa h/p\approx0.6$ and $0.7$.  Their fitted exponents are close to zero at $0.6$ ($-0.03$ for $p=8$ and $-0.18$ for $p=10$) and become positive at $0.7$ ($0.28$ and $0.65$, respectively), placing $0.7$ in the transition between the resolved and near-threshold regimes.  They are omitted from the main figure to keep the comparison focused.

\subsection{Penetrable scattering with an exact NtD truncation}\label{sec:penetrable-ntd}
To complement the manufactured heterogeneous example, we also consider a genuine scattering computation in the exact disk $\Omega=B_R$ with $R=1/2$.  The total field satisfies
\[
-\Delta u-\kappa^2\eta(x,y)u=0\qquad\text{in }\Omega,
\]
and is driven by the incident plane wave
\[u^{\mathrm{inc}}(x,y)=e^{i\kappa x}.\]
The refractive index is the smooth off-center inclusion
\[
\eta(x,y)=1+2.4\exp\!\left(-\left(\frac{\sqrt{(x-0.035)^2+1.45(y+0.015)^2}}{0.16}\right)^8\right),
\]
which produces visible refraction, focusing, and asymmetric interference in the scattered field.  The circular truncation boundary is kept exact, and the outgoing condition for the scattered field is imposed through the Fourier Neumann-to-Dirichlet map on $\Gamma_R$; see \cite{KapitaThesis2016,KapitaMonk2018,MonkPenaSelgas2025}.  The assembly is carried out directly on the exact disk, so this test contains no geometric boundary approximation error.

\begin{figure}[htbp]
\centering
\includegraphics[width=.96\linewidth]{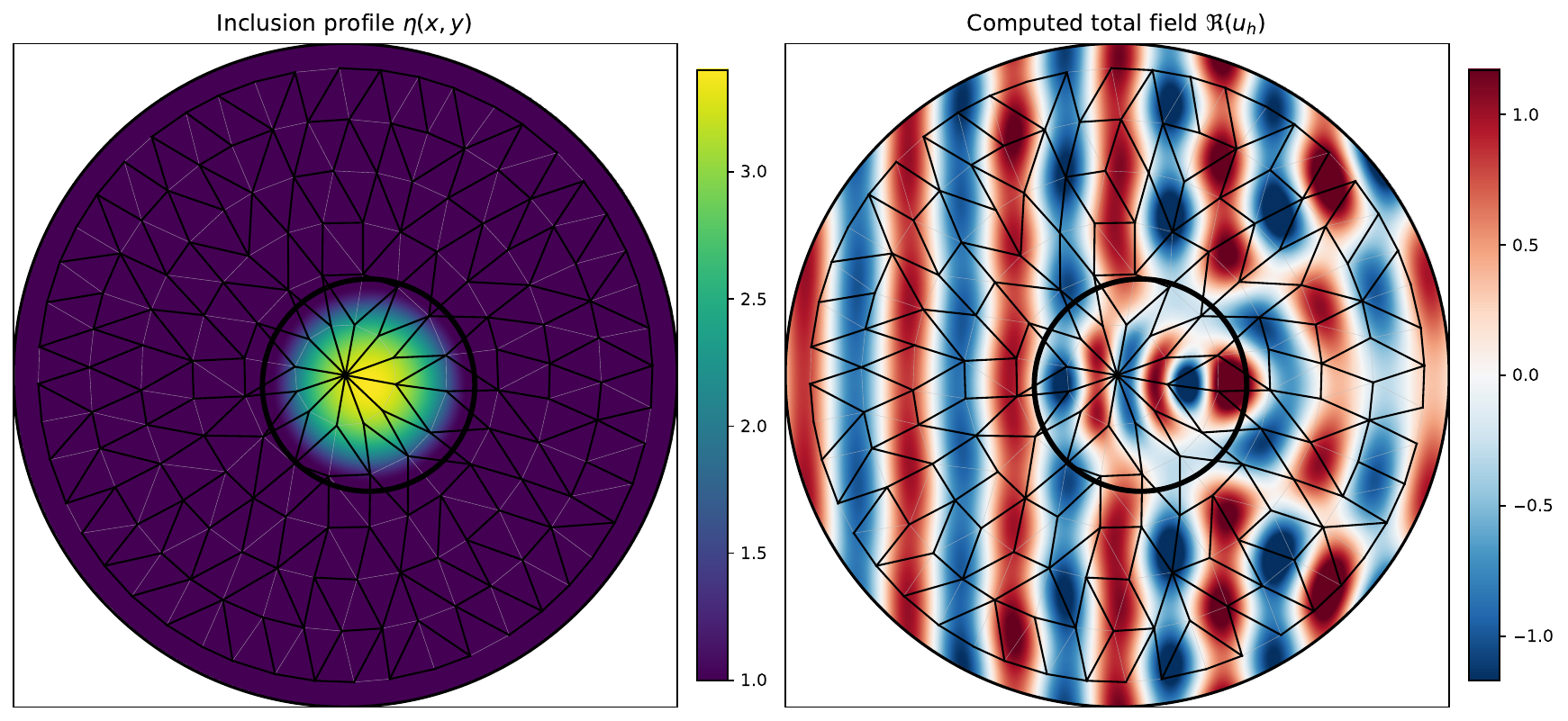}
\caption{Penetrable scattering in the exact disk with an exact Fourier NtD truncation at $\kappa=40$ and $p=8$.  Left: smooth off-center inclusion profile $\eta(x,y)$.  Right: computed total field $\Re(u_h)$ on the same exact circle, showing refraction and interference generated by the inhomogeneity.  The macro-patch boundaries are overlaid.}
\label{fig:penetrable-ntd}
\end{figure}

\begin{table}[htbp]
\centering
\caption{Timing data for the penetrable scattering experiment of \Cref{fig:penetrable-ntd}.}
\label{tab:penetrable-ntd}
\begin{tabular}{rrrrrrr}
\toprule
$\kappa$ & $p$ & patches & active & prep. (s) & assembly (s) & solve (s)\\
\midrule
40 & 8 & 200 & 3400 & 0.95 & 2.58 & 1.24\\
\bottomrule
\end{tabular}
\end{table}

The corresponding total runtime is $4.77$ s, and the relative NtD boundary residual is $1.04\times10^{-3}$.  The preprocessing uses the batched conformity-first implementation: explicit $C^1$ continuation, batched coefficient projection, batched residual-moment formation, and local QR reduction.  Every patch again retained the trace-sized dimension $2p+1=17$.  \Cref{fig:penetrable-ntd} shows that the same conformity-first reduced space captures a physically structured heterogeneous scattering pattern on an unstructured macro-patch mesh while preserving the exact circular geometry at the truncation boundary.

\subsection{Sensitivity to the local coefficient projection}\label{sec:coeftol}
The local quasi-Trefftz coordinates use an adaptive Bernstein approximation of the nonpolynomial medium, whereas the global residual and error evaluation use the original coefficient functions.  To quantify the influence of this auxiliary approximation, we repeat the resolved $\kappa=40$, $p=8$ calculation on the 144-patch unstructured mesh while varying the local coefficient-projection tolerance.

\begin{table}[htbp]
\centering
\caption{Coefficient-projection sensitivity for $\kappa=40$, $p=8$, and 144 unstructured macro-patches.}
\label{tab:coeftol}
\begin{tabular}{rrrr}
\toprule
tolerance & maximum degree & maximum projection error & relative $L^2$ error\\
\midrule
$10^{-3}$ & 2 & $8.19\times10^{-5}$ & $8.78\times10^{-4}$\\
$10^{-5}$ & 4 & $8.54\times10^{-6}$ & $8.13\times10^{-4}$\\
$10^{-7}$ & 6 & $9.97\times10^{-8}$ & $8.13\times10^{-4}$\\
$10^{-9}$ & 6 & $1.73\times10^{-10}$ & $8.13\times10^{-4}$\\
\bottomrule
\end{tabular}
\end{table}

Once the coefficient approximation is below the discretization error, tightening the local tolerance leaves the field unchanged to the displayed digits.  The saturation near $8.13\times10^{-4}$ is therefore the spatial discretization error on this fixed mesh, rather than a coefficient-projection or quadrature floor.  The experiments use the coefficient-space test \eqref{eq:coefftol} because it is inexpensive, reusable across wavenumbers, and directly available during batched medium projection.  The operator-level stopping test \eqref{eq:aadstop} is more selective when the retained singular gap is small and is intended for regimes in which the local residual map approaches rank sensitivity.

\subsection{An Airy turning-point transition beyond the uniformly positive regime}\label{sec:airy}
As a final stress test, we consider a one-dimensional turning-point field embedded in the two-dimensional domain.  This example lies intentionally outside the uniformly positive-index assumptions used in \Cref{sec:physical}.  We take
\[
A(x,y)=I,
\qquad
\eta(x,y)=x-x_\star,
\qquad
x_\star=0.53,
\]
and the exact solution
\[u(x,y)=\operatorname{Ai}\!\bigl(-\kappa^{2/3}(x-x_\star)\bigr),
\]
where $\operatorname{Ai}$ denotes the Airy function.  Since $\operatorname{Ai}''(z)=z\operatorname{Ai}(z)$, this gives the model problem
\[
-\Delta u-\kappa^2(x-x_\star)u=0.
\]
The line $x=x_\star$ is a turning point: the exact field is evanescent on one side and oscillatory on the other.  For this sign-changing coefficient, the boundary least-squares term uses a fixed positive Robin weight on $\partial\Omega$.

\begin{figure}[htbp]
\centering
\includegraphics[width=.82\linewidth]{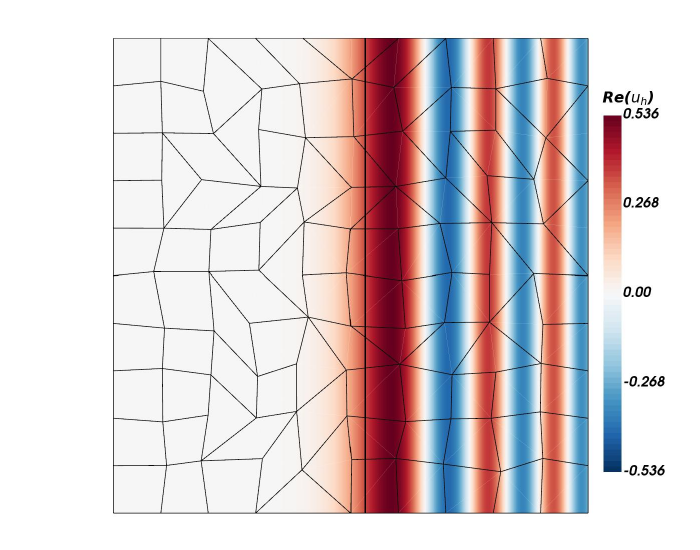}
\caption{Computed Airy turning-point solution on the unstructured macro-patch mesh for $\kappa=80$, $p=10$, and $100$ macro-patches.  The thick vertical line marks the turning point $x=x_\star$.  The centered blue--white--red colormap emphasizes the transition from the evanescent region to the oscillatory region.}
\label{fig:airy}
\end{figure}

\Cref{fig:airy} shows that the conformity-first Bernstein reduced space crosses the turning point cleanly using the same polynomial construction throughout the domain.  The oscillatory side is resolved with the same $2p+1$ trace-sized coordinates per patch that arise in the positive-index experiments.  Representative errors are listed in \Cref{tab:airy}; every patch retained the full trace-sized ranks $13$, $17$, and $21$ for $p=6$, $8$, and $10$, respectively.

\begin{table}[htbp]
\centering
\caption{Airy turning-point stress test on unstructured macro-patches.}
\label{tab:airy}
\begin{tabular}{rrrrrr}
\toprule
$\kappa$ & mesh parameter $n$ & $p$ & active & relative $L^2$ & relative $H^1$\\
\midrule
20 & 4  & 6  & 208  & $1.16\times10^{-4}$ & $6.54\times10^{-4}$\\
20 & 4  & 8  & 272  & $1.32\times10^{-6}$ & $1.05\times10^{-5}$\\
40 & 6  & 8  & 612  & $4.18\times10^{-5}$ & $1.92\times10^{-4}$\\
40 & 6  & 10 & 756  & $1.52\times10^{-6}$ & $9.98\times10^{-6}$\\
80 & 10 & 10 & 2100 & $5.61\times10^{-5}$ & $2.56\times10^{-4}$\\
\bottomrule
\end{tabular}
\end{table}

The positive-index theory of \Cref{sec:physical} applies to the uniformly elliptic positive-index regime.  The Airy experiment extends the numerical study to a sign-changing index and shows that the same local quasi-Trefftz mechanism remains effective as the solution changes character across the domain.  The unstructured macro-patch construction retains its trace-sized coordinates through the turning line and captures the transition at the accuracy reported in \Cref{tab:airy}.

\section{Relation to Trefftz methods and computational implications}\label{sec:related}
The proposed approximation is a discrete near-kernel inside a conforming polynomial space. Exact Trefftz functions satisfy the Helmholtz equation identically in each element, whereas the present local constraint annihilates all residual moments visible in the degree-$(p-2)$ test space. On a full-row-rank two-triangle patch, $p(p-1)$ interior residual directions are removed from the $p^2+p+1$ coordinates of the strongly $C^1$ spline space, leaving $2p+1$ reduced coordinates. The fixed-mesh experiments show why this compression is useful computationally: once the local resolution threshold is crossed, increasing $p$ improves the wave approximation rapidly while the globally coupled dimension grows only linearly with $p$.

The construction differs from Taylor quasi-Trefftz spaces in where the PDE information is imposed. Taylor methods annihilate a finite jet of the residual near a point \cite{ImbertGerard2025Taylor}; here projected residual moments are imposed over the whole macro-patch. Embedded Trefftz methods also extract PDE-adapted polynomial spaces algebraically \cite{LehrenfeldStocker2023,StockerVoulis2026}, but the present reduction is carried out only after the Bernstein pieces have been restricted to the geometrically admissible $C^1$ space. This is the source of the conformity-first structure.

Plane-wave DG, UWVF, and related Trefftz-DG methods couple exact local Helmholtz solutions primarily through boundary or impedance traces \cite{CessenatDespres1998,HiptmairMoiolaPerugia2011,BuffaMonk2008}. The present global coupling is similar in spirit, but the local representation is polynomial and the remaining consistency defect is retained explicitly in the graph functional. This allows smooth heterogeneous coefficients and nonzero forcing to be treated without choosing propagation directions or replacing the local basis by special functions.

From an implementation viewpoint, the principal tradeoff is clear. The local construction temporarily works with a quadratic-size conforming polynomial space, while the sparse global problem uses only the linear-size reduced coordinates. This makes preprocessing the natural location for dense local algebra. Explicit $C^1$ continuation, batching, and QR reduction keep that cost small in the tested range, and geometry-dependent quantities can be cached across frequency sweeps on a fixed mesh. The global matrix inherits the macro-patch adjacency graph and therefore retains sparse nearest-neighbor coupling.

The continuous constants in the physical-energy estimate remain problem dependent. In particular, the impedance stability and Cauchy-lifting constants can depend on wavenumber, anisotropy, and the domain. The analysis therefore separates these continuous stability factors from the discrete $\kappa h/p$ dependence rather than claiming a wavenumber-uniform heterogeneous bound. The fixed-resolution experiments provide a direct computational probe of the discrete part of this mechanism.

The present implementation uses two-triangle macro-patches. For a general multi-triangle patch, the same algorithm first assembles the smoothness matrix $H_P$ and then forms the residual map $C_P$ in the resulting conforming coordinates. The dimensions are $N_TN_p-\operatorname{rank}H_P$ before PDE reduction and $N_TN_p-\operatorname{rank}H_P-\operatorname{rank}C_P$ afterward. Larger patches can therefore exhibit topology-dependent smoothness dependencies or residual-rank loss. Efficient sparse continuation on cyclic aggregates and a quantitative study of patch-shape effects are natural extensions.

A second extension concerns the adaptive algebraic-degree stopping criterion. The experiments use the coefficient-space criterion because it is inexpensive and robust when the coefficient approximation is comfortably below the discretization error. The operator-level criterion in \eqref{eq:aadstop} becomes preferable when the retained singular gap is small, because it measures the perturbation on the residual map directly. This suggests a future implementation in which coefficient degree and local kernel tolerance are selected together.

\section{Conclusions}\label{sec:conclusions}
We introduced a computational framework in which a strongly $C^1$ Bernstein macro-patch is compressed locally to a quasi-Trefftz space before global coupling. The conformity-first ordering is the key structural choice: geometry determines the admissible spline coordinates, the PDE removes resolvable residual directions inside that space, and the global graph residual couples only the resulting trace-sized coordinates. For full-row-rank two-triangle patches the local dimension is reduced from $p^2+p+1$ to $2p+1$.

The same architecture separates heterogeneous coefficient approximation from the global discretization. Projected coefficients are used to construct the local reduced space, while the original medium enters the global residual. The perturbation analysis quantifies the resulting kernel rotation through the retained singular gap, and the physical-energy estimate separates the represented high-order residual, the low-order coefficient/source defect, and the unresolved residual tail.

The numerical results show that the reduction is effective on genuinely noncongruent unstructured patches. Strong $C^1$ conformity is maintained to roundoff, the local reduced basis remains well conditioned, and high-order convergence appears once the local resolution threshold is crossed. Fixed-resolution experiments distinguish a regime with little frequency growth from a near-threshold polluted regime. The exact-circle penetrable-scattering test demonstrates that the reduced polynomial representation can reproduce refraction and interference patterns while using a Fourier NtD map on the true circular boundary. The Airy experiment shows that the same local algebra remains stable through a turning-point transition outside the positive-index assumptions used in the physical-energy estimate.

Computationally, the method concentrates dense work in local preprocessing and keeps the global system in reduced coordinates. With explicit $C^1$ continuation and batched local algebra, preprocessing is no longer the dominant cost in the representative scattering experiment. This separation makes the approach attractive for repeated solves, parameter sweeps, and future adaptive extensions in which local degree, coefficient approximation, and patch topology are varied independently.

\section*{Data and code availability}
The accompanying source archive contains the deterministic unstructured mesh generator, optimized local reduction and graph-assembly routines, exact-circle NtD scattering driver, Airy test, degree-sweep driver, figure scripts, and the tabulated data used in the numerical section.

\section*{Declaration of generative AI and AI-assisted technologies in the writing process}
During the preparation of this work, the author used ChatGPT (OpenAI) for code prototyping and testing, numerical experimentation, visualization support, mathematical checking, and drafting and editing assistance. The author reviewed and verified the mathematical arguments, numerical results, references, and final manuscript and takes full responsibility for the content of the article.

\printbibliography
\end{document}